\def\bC {\mathbf{C}}
\def\bH {\mathbf{H}}
\def\bR {\mathbf{R}}
\def\fH {\mathfrak{H}}
\def\fS {\mathfrak{S}}
\def\cC {\mathcal{C}}
\def\cD {\mathcal{D}}
\def\cE {\mathcal{E}}
\def\cF {\mathcal{F}}
\def\cH {\mathcal{H}}
\def\cL {\mathcal{L}}
\def\cS {\mathcal{S}}
\def\cV {\mathcal{V}}
\def\cW {\mathcal{W}}
\def\g {{\gamma}}
\def\Ga {{\Gamma}}
\def\de {{\delta}}
\def\eps {{\epsilon}}
\def\L {{\Lambda}}
\def\si {{\sigma}}
\def\Om {{\Omega}}
\def\d {{\partial}}
\def\grad {{\nabla}}
\def\Dlt {{\Delta}}
\def\rstr {{\big |}}
\def\la {\langle}
\def\ra {\rangle}
\newcommand{\Span}{\operatorname{span}}
\newcommand{\Tr}{\operatorname{trace}}
\newcommand{\Lip}{\operatorname{Lip}}
\newcommand{\MKd}{\operatorname{dist_{MK,2}}}
\newcommand{\Op}{\operatorname{OP}}
\newcommand{\ba}{\begin{aligned}}
\newcommand{\ea}{\end{aligned}}
\newcommand{\be}{\begin{equation}}
\newcommand{\ee}{\end{equation}}
\newcommand{\lb}{\label}
\newtheorem{Thm}{Theorem}[section]
\newtheorem{Rmk}[Thm]{Remark}
\newtheorem{Lem}[Thm]{Lemma}
\newtheorem{Def}[Thm]{Definition}
\begin{document}

\title[Schr\"odinger in Mean-Field and Semiclassical Regime]{The Schr\"odinger Equation in the Mean-Field\\ and Semiclassical Regime}

\author[F. Golse]{Fran\c cois Golse}
\address[F.G.]{Ecole polytechnique, CMLS, 91128 Palaiseau Cedex, France}
\email{francois.golse@polytechnique.edu}

\author[T. Paul]{Thierry Paul}
\address[T.P.]{CNRS \& Ecole polytechnique, CMLS, 91128 Palaiseau Cedex, France}
\email{thierry.paul@polytechnique.edu}

\begin{abstract}
In this paper, we establish (1) the classical limit of the Hartree equation leading to the Vlasov equation, (2) the classical limit of the $N$-body linear Schr\"odinger equation uniformly in $N$ leading to the $N$-body Liouville equation of classical 
mechanics and (3) the simultaneous mean-field and classical limit of the $N$-body linear Schr\"odinger equation leading to the Vlasov equation. In all these limits, we assume that the gradient of the interaction potential is Lipschitz continuous.
All our results are formulated as estimates involving a quantum analogue of the Monge-Kantorovich distance of exponent $2$ adapted to the classical limit, reminiscent of, but different from the one defined in [F. Golse, C. Mouhot, T. Paul, 
Commun. Math. Phys. \textbf{343} (2016), 165--205]. As a by-product, we also provide bounds on the quadratic Monge-Kantorovich distances between the classical densities and  the Husimi functions of the quantum density matrices.
\end{abstract}

\keywords{Schr\"odinger equation, Hartree equation, Liouville equation, Vlasov equation, Mean-field limit, Classical limit, Monge-Kantorovich distance}

\subjclass{82C10, 35Q41, 35Q55 (82C05,35Q83)}

\date{\today}

\maketitle

 

\section{Introduction}\lb{S-Intro}


Consider the nonrelativistic quantum dynamics of a system of $N$ particles interacting through a two-body potential with a mean-field type coupling constant. (In other words, the coupling constant is chosen so that the kinetic and potential energies 
of typical $N$-particle configurations are of the same order of magnitude.) After suitable rescalings of the various quantities involved (see the introduction of \cite{FGMouPaul} for details), the dynamics happens to be governed by a two-parameter 
family of  Schr\"odinger equations indexed by $\hbar$ and $N$, of the form
\be\lb{CPSchr}
\left\{
\ba
{}&i\hbar\d_t\Psi_{N,\hbar}=-\tfrac12\hbar^2\sum_{k=1}^N\Dlt_{x_k}\Psi_{N,\hbar}+\frac1{2N}\sum_{k,l=1}^NV(x_k-x_l)\Psi_{N,\hbar}\,,
\\
&\Psi_{N,\hbar}\rstr_{t=0}=\Psi_{N,\hbar}^{in}\,.
\ea
\right.
\ee
Here $\Psi_{N,\hbar}^{in}\equiv\Psi_{N,\hbar}^{in}(x_1,\ldots,x_N)\in\bC$ and $\Psi_{N,\hbar}\equiv\Psi_{N,\hbar}(t,x_1,\ldots,x_N)\in\bC$ are the wave functions of the $N$-particle system initially and at time $t$ respectively, while $V$ is the
real-valued, rescaled interaction potential.

We are concerned with various asymptotic limits of this dynamics as $N\to\infty$ and $\hbar\to 0$, which are represented in the diagram below.

In \cite{FGMouPaul}, the limits corresponding to the horizontal arrows have been studied in detail. Following an idea of Dobrushin \cite{Dobru}, we arrived at quantitative estimates on distances metrizing the weak convergence of probability
measures, or of their quantum analogues. However, the method used in \cite{FGMouPaul} differs from Dobrushin's in \cite{Dobru} in several ways. First, we avoid formulating our results in terms of the $N$-particle phase-space empirical measure,
which does not seem to have any clear quantum analogue. Likewise, we avoid expressing the solution of the $N$-particle Liouville equation by the method of characteristics, of which there is no convenient quantum analogue. Our approach 
of the Vlasov limit of the $N$-particle Liouville equation is of a completely Eulerian nature, so that its extension to the quantum dynamics is rather straightforward.

\begin{center}

\bigskip
\begin{tabular}{ccc}
{\fbox{\bf Schr\"odinger}}& {$\stackrel{N\to\infty}{\longrightarrow}$}& {\fbox{\bf Hartree}} 
\\ [7mm]
\scriptsize{${\hbar\to 0}$}\ \LARGE{$\downarrow$}\ \ \ \ \ \ \  &\LARGE$\searrow$ & \ \ \ \ \ \ \  \LARGE{{$\mathbf{\downarrow}$}}\ 
\scriptsize{${\hbar\to 0}$} 
\\ [7mm]
{\fbox{\bf Liouville}}& {$\stackrel{N\to\infty}{\longrightarrow}$}&{\fbox {\bf Vlasov}} 
\end{tabular}
\bigskip

\centerline{Diagram 1: The large $N$ and small $\hbar$ asymptotic limits}

\bigskip
\bigskip
\end{center}

The main result in \cite{FGMouPaul} is that the upper horizontal arrow, i.e. the large $N$, mean-field limit of the $N$-particle linear Schr\"odinger equation to the Hartree equation is uniform as $\hbar\to 0$. This uniform convergence was stated
in terms of a nonnegative quadratic quantity $MK_2^\hbar(R,R')$, defined for pairs of density operators $R,R'$ (nonnegative operators of trace $1$), obtained by quantization of the quadratic Monge-Kantorovich distance $\MKd$ (see
section \ref{S-MRes} where the definition of this distance is recalled for the reader's convenience). The corresponding estimate was proved under the assumption that the interaction force field $\grad V$ is bounded and Lipschitz continuous.

An important observation is that $MK_2^\hbar$ is \textit{not} a distance on the set of density operators. However, $MK_2^\hbar(R,R')$ remains to within $O(\hbar)$ of the quadratic Monge-Kantorovich distance between classical objects
attached to $R,R'$. Specifically, 
$$
\MKd(\tilde W_\hbar[R],\tilde W_\hbar[R'])^2-O(\hbar)\le MK_2^\hbar(R,R')^2
$$
where $\tilde W_\hbar$ designates the Husimi transform, while
$$
MK_2^\hbar(R,R')^2\le\MKd(\mu,\mu')^2+O(\hbar)
$$
if $R,R'$ are T\"oplitz operators at scale $\hbar$ whose respective symbols are (up to some normalization) the Borel probability measures $\mu,\mu'$. (See section \ref{S-MRes}, where the definitions of the Husimi transform and of T\"oplitz 
operators are recalled.)

\smallskip
Proving the uniformity as $\hbar\to 0$ of the mean-field limit corresponding to the horizontal arrow in Diagram 1 is obviously the key to obtaining the convergence of the $N$-body Schr\"odinger equation to the Vlasov equation in the joint limit
as $N\to\infty$ and $\hbar\to 0$. This convergence holds independently of distinguished scaling assumptions that would link $N$ and $\hbar$, as a consequence of the main result in \cite{FGMouPaul}, and of Theorem IV.2 in \cite{LionsPaul},
which establishes the validity of the classical limit of the Hartree equation, leading to the Vlasov equation (the right vertical arrow in Diagram 1).

Indeed, the classical limit of the Hartree equation leading to the Vlasov equation has been first investigated mathematically in \cite{LionsPaul} for very general initial data and potentials, including the Coulomb potential, in terms of an 
appropriate weak topology and related compactness methods. The price to pay for this generality on the data is the lack of quantitative information on the rate of convergence. Besides, convergence is proved along sequences $\hbar_n\to 0$. 
Under additional assumptions on the initial data and the interaction potential, one can prove \cite{APPP1} that the Wigner function of the solution of Hartree's equation is $L^2$-close to its weak limit, i.e. to the solution of the Vlasov equation.
The quantum counterpart of this result is the closeness of the solution of Hartree's equation to the (Weyl) quantization of the solution of Vlasov in Hilbert-Schmidt norm. The convergence rate is $O(\hbar^\alpha)$ with $\alpha<1$ depending 
on the  initial data and potential (assumed regular enough). The case $\alpha=1$ was recently treated in \cite{BPSS}, where a more exhaustive bibliography on the subject can be found. More precisely, the reference \cite{BPSS} provides an
estimate of the difference between the solution of the Hartree equation and the Weyl quantization of the solution of the Vlasov equation in trace norm. Unlike Hilbert-Schmidt norm estimates, trace norm estimates do not have classical analogues. 
At variance with these two kinds of estimates (with either the trace norm or the Hilbert-Schmidt norm), our ``pseudo-distance'' $E_\hbar$ defined below and used in Theorem \ref{T-HV} connects directly a quantum object (a solution of the Hartree 
equation) with a classical one (a solution of the Vlasov equation). Besides, $E_\hbar$ has a (kind of) classical analogue: Theorem 2.5 provides an upper bound for the classical quadratic Monge-Kantorovich distance between the solution of 
the Vlasov equation and the Husimi function of the solution of the Hartree equation, up to terms of order $O(\hbar)$ which vanish in the semiclassical limit. The case of pure states is treated in \cite{APPP2} for initial data given by coherent states, 
and the solution of Hartree's equation is computed at leading order in terms of the linearization of the ``Vlasov flow". Most likely, the method used in \cite{APPP2} can be extended to any order in the expansion of the Hartree solution in 
powers of $\hbar^{1/2}$. 

Our first main result in the present paper, Theorem \ref{T-HV}, bears on a \textit{quantitative estimate} for the classical limit of Hartree's equation leading to the Vlasov equation, i.e. on the convergence rate for Theorem IV.2 in \cite{LionsPaul}. It involves  a hybrid quantity $E_\hbar$ built on $\MKd$ and $MK^\hbar_2$, whose main properties are stated in Theorem \ref{T-PtyE} and which allows comparing classical and quantum objects (see Definition \ref{ehbar} below). This convergence
rate is established under the assumption that the interaction force field $\grad V$ is bounded and Lipschitz continuous, and involves the Lipschitz constant of $\grad V$. Observe that the large-time growth of the convergence rate obtained in
Theorem \ref{T-HV} is exponential, at variance with the estimates obtained in \cite{APPP1,BPSS}, which are super-exponential. Our result can be also formulated as a direct comparison between the Vlasov solution and the Husimi function of 
the Hartree solution. (A similar comparison can be found in our previous work with C. Mouhot, at least implicitly, as a consequence of Theorem 2.4 and 2.3 (2) in \cite{FGMouPaul}.) Since a density operator is completely determined by its Husimi 
function for each $\hbar>0$ (see Remark \ref{husforever} below), the estimate in Theorem \ref{T-HV}  involves all the information included in the quantum density, i.e. the Hartree solution. We also recall that the Husimi and the Wigner functions
of a density operator have the same classical limit (i.e. the same limit as $\hbar\to 0$): see Theorem III.1 (1) in \cite{LionsPaul}.

Our second main result, Theorem \ref{T-NSV}, establishes the limit of the $N$-body linear Schr\"odinger equation leading to the Vlasov equation in the limit as $N\to\infty$ and $\hbar\to 0$ jointly, again with a convergence rate where the
``distance'' between the single-particle marginal of the $N$-body density operator and the Vlasov solution is expressed in terms of the quantity $E_\hbar$ defined in Definition \ref{ehbar}. Since the quantities $MK_2^\hbar$ and $E_\hbar$ 
are not distances, the estimate in Theorem \ref{T-NSV} does not immediately follow from Theorem \ref{T-HV} and the uniform convergence result, i.e. Theorem 2.4 in \cite{FGMouPaul}. For that reason, we have provided a direct proof of 
the  limit as $N\to\infty$ and $\hbar\to 0$ jointly, corresponding to the  diagonal arrow in Diagram 1. This proof combines ideas from the proofs of Theorem \ref{T-HV} and of Theorem 2.4 in \cite{FGMouPaul}. As for Theorem \ref{T-HV},
the convergence rate obtained in Theorem \ref{T-NSV} grows exponentially fast as $t\to+\infty$. The convergence proof involves a stability and a consistency estimate, in the sense of the Lax equivalence theorem in numerical analysis
\cite{LaxRicht}. Perhaps the new element in this proof, in addition to the idea of measuring the ``distance'' between a classical and a quantum density by  the $E_\hbar$ functional, is the following observation: while the consistency estimate 
involves the $N$-fold tensor product of copies of the Vlasov solution, the stability estimate is most conveniently formulated in terms of \textit{the first equation only} in the BBGKY hierarchy of the quantum $N$-body problem.

To the best of our knowledge, the first work concerning this subject is the seminal paper by Graffi-Martinez-Pulvirenti \cite{GMP}. For each sequence $\hbar(N)\to 0$ as $N\to\infty$ and each monokinetic solution of the Vlasov equation 
(i.e. of the form $f(t,x,\xi):=\rho(t,x)\delta(\xi-u(t,x))$), the Wigner transform at scale $\hbar(N)$ of the first marginal of the solution of the Schr\"odinger equation is proved to converge to the solution of the Vlasov equation over the time interval 
$[0,T]$. A priori, the convergence rate and the time $T$ depend both on the Vlasov solution $f$ and on the sequence $\hbar(N)$ (see Theorem 1.1 in \cite{GMP}). On the preceding diagram, the result proved in \cite{GMP} corresponds 
to the left vertical and bottom horizontal arrows along distinguished sequences $(\hbar(N);N)$, over time intervals which may involve the dependence of $\hbar$ in terms of $N$.

Another approach of the same problem can be found in \cite{PP}: it is proved that each term in the semiclassical expansion as $\hbar\to 0$ of the quantum $N$-body problem converges as $N\to\infty$ to the corresponding term in the 
semiclassical expansion of Hartree's equation.  

One should also mention the earlier reference \cite{NarnhoSewell}, which treated the mean-field limit for systems of $N$ fermions with $\hbar(N)=N^{-1/3}$ (see also \cite{Spohn}, together with the more recent reference \cite{BPSS}).

Finally, we have applied the ideas in the proofs of Theorems \ref{T-HV} and \ref{T-NSV}, i.e. using the $E_\hbar$ functional and the first equation in the BBGKY hierarchy of the quantum $N$-body problem in the stability estimate, to the 
classical limit of the quantum $N$-body problem (i.e. the $N$-body Schr\"odinger equation) to the classical $N$-body problem (i.e. the $N$-body Liouville equation) in the limit as $\hbar\to 0$. This is the left vertical arrow in Diagram 1. 
This limit has been the subject matter of a large body of mathematical literature for $N$ fixed. At variance with all these results, the main novelty in Theorem \ref{T-SL} is that this vanishing $\hbar$-limit is uniform as $N\to\infty$. Here again,
a convergence rate in terms of the $E_\hbar$ functional and of the Lipschitz constant of the interaction force $\grad V$ is given.

\smallskip 
The precise statements of all the assumptions and main results  of this article, together with the general setting of our approach are given in detail in the next section. 

\newpage

\section{Main Results}\lb{S-MRes}


\subsection{From the Hartree Equation to the Vlasov Equation}\label{fhtv}


Let us consider the Cauchy problem for the Hartree equation, written in terms of density operators:
\be\lb{Hartree}
i\hbar\d_tR_\hbar=[-\tfrac12\hbar^2\Dlt_x+V\star_x\rho[R_\hbar](t,x),R_\hbar]\,,\quad R_\hbar\rstr_{t=0}=R_\hbar^{in}\,.
\ee
The notation is as follows: $R_\hbar\equiv R_\hbar(t)\in\cL(\fH)$ is the time-dependent density operator on the state space $\fH:=L^2(\bR^d)$. Henceforth, it is assumed that
$$
R_\hbar(t)=R_\hbar(t)^*\ge 0\,,\qquad\hbox{ and that }\Tr(R_\hbar(t))=1\quad\hbox{ for all }t\ge 0\,.
$$
The set of all bounded operators on $\fH$ satisfying these properties is denoted by $\cD(\fH)$. Denoting by $r_\hbar(t,x,y)$ the integral kernel of $R_\hbar(t)$, i.e.
$$
R_\hbar(t)\phi(x):=\int_{\bR^d}r_\hbar(t,x,y)\phi(y)dy\,,
$$
we set\footnote{If $R\in\cD(\fH)$, then both $R$ and $R^{1/2}$ are Hilbert-Schmidt operators on $L^2(\bR^d)$ and therefore have integral kernels denoted respectively $r\equiv r(x,y)$ and $r_{1/2}\equiv r_{1/2}(x,y)$ in 
$L^2(\bR^d\times\bR^d)$. Since $R^{1/2}$ is self-adjoint
$$
r(x,y)=\int_{\bR^d}r_{1/2}(x,z)\overline{r_{1/2}(y,z)}dz\hbox{ for a.e. }x,y\in\bR^d\,.
$$
By the Fubini theorem, the function 
$$
\rho[R]:\,x\mapsto r(x,x):=\int_{\bR^d}|r_{1/2}(x,z)|^2dz\hbox{ belongs to }L^1(\bR^d)\,,
$$
(see Example 1.18 in chapter X, \S 1 in \cite{Kato}) and, by the Cauchy-Schwarz inequality
$$
\iint_{\bR^d\!\times\!\bR^d}\!|r(x\!+\!h,x)\!-\!r(x,x)|^2dxdy\!\le\!\|r_{1/2}\|_{L^2(\bR^d\!\times\!\bR^d)}\iint_{\bR^d\!\times\!\bR^d}\!|r_{1/2}(x\!+\!h,y)\!-\!r(x,y)|^2dxdy\!\to\! 0
$$
as $|h|\to 0$. In other words, the function $(x,h)\mapsto r(x+h,x)$ belongs to $C(\bR^d_h;L^1(\bR^d))$. This is a special case of Lemma 2.1 (1) in \cite{BGM}.}
$$
\rho[R_\hbar](t,x):=r_\hbar(t,x,x)\,.
$$
The interaction potential $V$ is assumed to be an even, real-valued, bounded function in the class $C^{1,1}(\bR)$. The existence and uniqueness theory for the Cauchy problem (\ref{Hartree}) has been studied in \cite{BoveDPF}.

On the other hand, consider the Vlasov equation
\be\lb{Vlasov}
\d_tf+\{\tfrac12|\xi|^2+V\star_x\rho_f(t,x),f\}=0\,,\quad f\rstr_{t=0}=f^{in}\,.
\ee
Here $f\equiv f(t,x,\xi)$  is a time-dependent probability density on $\bR^d\times\bR^d$,
$$
\rho_f(t,x):=\int_{\bR^d}f(t,x,\xi)d\xi\,,
$$
and $\{\cdot,\cdot\}$ is the Poisson bracket such that
$$
\{x_k,x_l\}=\{\xi_k,\xi_l\}=0\,,\quad\{\xi_k,x_l\}=\de_{kl}\,.
$$

Next we define a way to measure the convergence rate of $R_\hbar$ to $f$ in the limit as $\hbar\to 0$. It involves a quantity which is intermediate between the notion of Monge-Kantorovich distance of exponent $2$ between Borel probability 
measures on $\bR^d\times\bR^d$ and the pseudo-distance $MK_2^\eps$ between density operators defined in \cite{FGMouPaul} (Definition 2.2). First we define a notion of coupling between distribution functions in statistical mechanics and
density operators in quantum mechanics.

\begin{Def}
Let $p\equiv p(x,\xi)$ be a probability density on $\bR^d\times\bR^d$, and let $R\in\cD(\fH)$. A coupling of $p$ and $R$ is a measurable function $Q:\,(x,\xi)\mapsto Q(x,\xi)$ defined a.e. on $\bR^d\times\bR^d$ and with values in $\cL(\fH)$ 
s.t. $Q(x,\xi)=Q(x,\xi)^*\ge 0$  for a.e. $(x,\xi)\in\bR^d\times\bR^d$, and
$$
\left\{
\ba
{}&\Tr(Q(x,\xi))=p(x,\xi)\hbox{ for a.e. }(x,\xi)\in\bR^d\times\bR^d\,,
\\
&\int_{\bR^d\times\bR^d}Q(x,\xi)dxd\xi=R\,.
\ea
\right.
$$
The set of all such functions is denoted by $\cC(p,R)$.
\end{Def}

\smallskip
Notice that $\cC(p,R)$ is nonempty, since the function $(x,\xi)\mapsto p(x,\xi)R$ belongs to $\cC(p,R)$.

\bigskip
Mimicking the definition of Monge-Kantorovich distances, we next define the pseudo-distance between $R_\hbar$ and $f$ in terms of an appropriate ``cost function'' analogous to the quadratic cost function used in optimal transport.

\begin{Def}\label{ehbar}
For each probability density $p\equiv p(x,\xi)$ on $\bR^d\times\bR^d$ and each $R\in\cD(\fH)$, we set
$$
E_\hbar(p,R):=\left(\inf_{Q\in\cC(p,R)}\int_{\bR^d\times\bR^d}\Tr(c_\hbar(x,\xi)Q(x,\xi))dxd\xi\right)^{1/2}\in[0,+\infty]\,,
$$
where the transportation cost $c_\hbar$ is the function of $(x,\xi)$ with values in the set of unbounded operators on $\fH=L^2(\bR^d_y)$ defined by the formula
$$
c_\hbar(x,\xi):=\tfrac12(|x-y|^2+|\xi+i\hbar\grad_y|^2)\,.
$$
\end{Def}

Before going further, we briefly discuss some basic properties of $E_\hbar$. First we recall a few elementary facts concerning T\"oplitz quantization on $\bR^d$. For each $z=x+i\xi\in\bC^{d}$, we denote 
$$
|z,\hbar\ra:\,y\mapsto(\pi\hbar)^{-d/4}e^{-|y-x|^2/2\hbar}e^{i\xi\cdot(y-x)/\hbar}\,,
$$
and we designate by $|z,\hbar\ra\la z,\hbar|$ the orthogonal projection on the line $\bC|z,\hbar\ra$ in $\fH$. An elementary computation shows that
$$
\| |z,\hbar\ra\|_\fH=1\,.
$$
For each Borel probability measure on $\bR^d\times\bR^d$, we set
$$
\Op^T_\hbar(\mu):=\frac1{(2\pi\hbar)^d}\int_{\bR^d\times\bR^d}|x+i\xi,\hbar\ra\la x+i\xi,\hbar|\mu(dxd\xi)
$$
and we recall that
\be\label{decomp}
\Op^T_\hbar(1)=I_\fH\,.
\ee
If $\phi$ is a polynomial of degree $\le 2$, one has
\be\lb{OpTQuad}
\Op^T_\hbar(\phi(x))=\phi(x)+\tfrac14\hbar(\Dlt\phi)I_\fH\,,\quad\Op^T_\hbar(\phi(\xi))=\phi(-i\hbar\grad_x)+\tfrac14\hbar(\Dlt\phi)I_\fH\,,
\ee
according to formula (48) in \cite{FGMouPaul}.

We also recall the definition of the Wigner and Husimi transforms of a density operator on $\fH$. If $R\in\cD(\fH)$ with integral kernel $r$, its Wigner transform at scale $\hbar$ is the function on $\bR^d\times\bR^d$ defined by the formula
$$
W_\hbar[R](x,\xi):=\frac1{(2\pi)^d}\int_{\bR^d}e^{-i\xi\cdot y}r(x+\tfrac12\hbar y,x-\tfrac12\hbar y)dy\,.
$$
The Husimi transform of $R$ is
$$
\widetilde W_\hbar[R]:=e^{\hbar\Dlt_{x,\xi}/4}W_\hbar[R]
$$
and we recall that
$$
\widetilde W_\hbar[R]\ge 0\,,
$$
while
$$
\int_{\bR^d\times\bR^d}\widetilde W_\hbar[R](x,\xi)dxd\xi=\int_{\bR^d\times\bR^d}W_\hbar[R](x,\xi)dxd\xi=\Tr(R)=1\,.
$$
In particular, $\widetilde W_\hbar[R]$ is a probability density on $\bR^d\times\bR^d$ for each $R\in\cD(\fH)$.

\begin{Rmk}[$\widetilde{W_\hbar}{[R]}$ uniquely determines $R$]\label{husforever}
Let $R$ be a density operator on $\fH=L^2(\bR^d)$ with integral kernel $r\equiv r(y,y')$. Elementary computations show that, for each $x,\xi\in\bR^d$, one has
$$
\widetilde W_\hbar[R](x,\xi)=\frac{(\pi\hbar)^{d/2}e^{-|x|^2/\hbar}}{2^d(2\pi\hbar)^{2d}}J(x,\xi)\,,
$$
where
$$
J(x,\xi):=\iint r(y,y')e^{-(|y|^2+|y'|^2)/2\hbar}e^{(x\cdot(y+y')-i\xi\cdot(y-y'))/\hbar}dydy'\,.
$$
Since $R$ is a Hilbert-Schmidt operator on $\fH$, its kernel $r\in L^2(\bR^d\times\bR^d)$ and therefore $J$ extends as an entire holomorphic function of $(x,\xi)\in\bC^d\times\bC^d$. Therefore $J$ is uniquely determined by its restriction
to $\bR^d\times\bR^d$, i.e. by $\widetilde W_\hbar[R]$. Denoting by $\cF$ the Fourier transformation, we observe that
$$
J(-ix,\xi)=\cF[r\exp(-(|y|^2+|y'|^2)/2\hbar)]((x+\xi)/\hbar,(x-\xi)/\hbar)\,,
$$
and conclude that $J$ uniquely determines in turn $r$ by Fourier inversion. Therefore, the operator $R$ is uniqueley determined by its Husimi transform. See also Lemma A.2.1 in \cite{BSimon} for a more general result of the same type.
\end{Rmk}

If $\mu$ is a Borel probability measure on $\bR^d\times\bR^d$, then
\be\lb{TrOpTR}
\Tr(\Op^T_\hbar(\mu)R)=\int_{\bR^d\times\bR^d}\widetilde W_\hbar[R](x,\xi)\mu(dxd\xi)\,,
\ee
according to formula (54) in \cite{FGMouPaul}. In particular, for each polynomial of degree $\le 2$, one has
\be\lb{TrOpTQuad}
\Tr((\phi(x)+\phi(-i\hbar\grad_x))\Op^T_\hbar(\mu))=\int_{\bR^d\times\bR^d}(\phi(x)+\phi(\xi))\mu(dxd\xi)+\tfrac12\hbar\Dlt\phi\,,
\ee
see formula (55) in \cite{FGMouPaul}. (See also Lemma 5.4 in \cite{Lerner}, and more generally \cite{BeSh,Lerner,LernerBook} for a more complete discussion on quantization and symbolic calculus.)

In addition, we shall need the following properties of $E_\hbar$.

\begin{Thm}\lb{T-PtyE}
For each probability density $p$ on $\bR^d\times\bR^d$ such that
$$
\int_{\bR^d\times\bR^d}(|x|^2+|\xi|^2)p(x,\xi)dxd\xi<\infty
$$
and each $R\in\cD(\fH)$, one has
$$
E_\hbar(p,R)^2\ge\tfrac12d\hbar\,.
$$
(1) Let $R_\hbar=\Op^T_\hbar((2\pi\hbar)^d\mu)$, where $\mu$ is a Borel probability measure on $\bR^d\times\bR^d$. Then
$$
E_\hbar(p,R_\hbar)^2\le\MKd(p,\mu)^2+\tfrac12d\hbar\,.
$$
(2) For each $R\in\cD(\fH)$, one has
$$
E_\hbar(p,R)^2\ge\MKd(p,\widetilde W_\hbar[R])^2-\tfrac12d\hbar\,.
$$
(3) If $R_\hbar\in\cD(\fH)$ and $W_\hbar[R_\hbar]\to\mu$ in $\cS'(\bR^d\times\bR^d)$ as $\hbar\to 0$, then $\mu$ is a Borel probability measure on $\bR^d\times\bR^d$ and one has
$$
\MKd(p,\mu)\le\varliminf_{\hbar\to 0}E_\hbar(p,R_\hbar)\,.
$$
\end{Thm}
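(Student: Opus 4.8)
The plan is to establish the four assertions of Theorem \ref{T-PtyE} in order, using the definition of $E_\hbar$ together with the Töplitz-quantization identities \eqref{OpTQuad}, \eqref{TrOpTR}, \eqref{TrOpTQuad}, and the lower-bound decomposition of the transportation cost $c_\hbar$. The common algebraic ingredient is the identity
\begin{equation*}
c_\hbar(x,\xi)=\tfrac12|x-y|^2+\tfrac12|\xi+i\hbar\grad_y|^2,
\end{equation*}
where the second operator, acting on $L^2(\bR^d_y)$, equals $|\xi|^2+\xi\cdot(i\hbar\grad_y)+(i\hbar\grad_y)\cdot\xi$ modulo the purely quantum term $-\hbar^2\Dlt_y$; taking the trace against a density operator $S$ of trace $q$ one gets $\Tr(|\xi+i\hbar\grad_y|^2S)=q|\xi|^2+2\xi\cdot\Tr((i\hbar\grad_y)S)-\hbar^2\Tr(\Dlt_y S)$, and the last term is $\ge q\,d\hbar$ by the uncertainty/Heisenberg inequality (equivalently by the inequality $\Tr(c_\hbar(0,0)S)\ge\tfrac12 d\hbar q$ for a density operator of mass $q$, which is where the universal lower bound $\tfrac12 d\hbar$ originates).

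\textit{The universal lower bound.} For the very first inequality $E_\hbar(p,R)^2\ge\tfrac12 d\hbar$: for any coupling $Q\in\cC(p,R)$ and a.e.\ $(x,\xi)$ the operator $Q(x,\xi)$ is nonnegative with trace $p(x,\xi)$, and one checks $\Tr(c_\hbar(x,\xi)Q(x,\xi))\ge\tfrac12\hbar d\,\Tr(Q(x,\xi))=\tfrac12\hbar d\,p(x,\xi)$ — this is exactly the statement that the ground-state energy of the $y$-harmonic oscillator $\tfrac12(|x-y|^2+|\xi+i\hbar\grad_y|^2)$ is $\tfrac12 d\hbar$, so the operator minus $\tfrac12 d\hbar$ times the identity is nonnegative and its trace against $Q(x,\xi)\ge0$ is nonnegative. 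Integrating in $(x,\xi)$ and using $\int p=1$ gives the claim. The finiteness of the second moment of $p$ is used only to guarantee that the infimum is not vacuously $+\infty$, via part (1) applied with $\mu$ a suitable Töplitz symbol (or directly with the product coupling $p(x,\xi)R$ for an $R$ with finite energy).

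\textit{Parts (1) and (2).} For (1), take any optimal (or near-optimal) classical coupling $\pi$ of $p$ and $\mu$ on $(\bR^d\times\bR^d)^2$, with coordinates $(x,\xi)$ on the first factor and $(x',\xi')$ on the second, and build the quantum coupling
\begin{equation*}
Q(x,\xi):=\frac{1}{(2\pi\hbar)^d}\int_{\bR^d\times\bR^d}|x'+i\xi',\hbar\ra\la x'+i\xi',\hbar|\,\pi_{x,\xi}(dx'd\xi'),
\end{equation*}
where $\pi_{x,\xi}$ is the disintegration of $\pi$ with respect to its first marginal $p$; then $\Tr Q(x,\xi)=p(x,\xi)$ by \eqref{decomp} and $\int Q=\Op^T_\hbar((2\pi\hbar)^d\mu)=R_\hbar$. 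Computing $\Tr(c_\hbar(x,\xi)Q(x,\xi))$ reduces, via \eqref{OpTQuad} applied coordinatewise to the coherent-state projector, to $\tfrac12(|x-x'|^2+|\xi-\xi'|^2)$ plus the error $\tfrac12 d\hbar$ coming from the Laplacian corrections $\tfrac14\hbar\Dlt\phi$ for $\phi=|x-y|^2$ and $\phi=|\xi-\eta|^2$ (each contributing $\tfrac14\hbar\cdot 2d$ — careful bookkeeping gives $\tfrac12 d\hbar$ total). Integrating against $\pi$ yields $E_\hbar(p,R_\hbar)^2\le\int\tfrac12(|x-x'|^2+|\xi-\xi'|^2)\pi+\tfrac12 d\hbar=\MKd(p,\mu)^2+\tfrac12 d\hbar$. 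For (2), the input is the Husimi formulation \eqref{TrOpTR} of Töplitz traces: given any $Q\in\cC(p,R)$, the (scalar) measure $\Pi(dx d\xi\,dx' d\xi'):=\widetilde W_\hbar[Q(x,\xi)](x',\xi')\,dx'd\xi'\,dx d\xi$ on $(\bR^d\times\bR^d)^2$ is nonnegative, has first marginal $p$ (since $\int\widetilde W_\hbar[Q(x,\xi)]dx'd\xi'=\Tr Q(x,\xi)=p(x,\xi)$) and second marginal $\widetilde W_\hbar[R]$ (since $\int Q\,dx d\xi=R$ and $\widetilde W_\hbar$ is linear), hence is a coupling of $p$ and $\widetilde W_\hbar[R]$; then $\MKd(p,\widetilde W_\hbar[R])^2\le\int\tfrac12(|x-x'|^2+|\xi-\xi'|^2)\Pi$, and the right-hand side equals $\int\Tr(c_\hbar(x,\xi)Q(x,\xi))dx d\xi-\tfrac12 d\hbar$ because, using \eqref{TrOpTQuad}-type identities (or directly the relation between $\Tr(c_\hbar(x,\xi)Q(x,\xi))$ and the second moments of $\widetilde W_\hbar[Q(x,\xi)]$, which shift by $\tfrac14\hbar\cdot 2d$ per quadratic variable), $\int\tfrac12(|x-x'|^2+|\xi-\xi'|^2)\widetilde W_\hbar[Q(x,\xi)](x',\xi')dx'd\xi'=\Tr(c_\hbar(x,\xi)Q(x,\xi))-\tfrac12 d\hbar\,p(x,\xi)$. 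Taking the infimum over $Q$ gives (2).

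\textit{Part (3).} Here the hypothesis is $W_\hbar[R_\hbar]\to\mu$ in $\cS'$. First, that $\mu$ is a Borel probability measure: $\widetilde W_\hbar[R_\hbar]=e^{\hbar\Dlt/4}W_\hbar[R_\hbar]\ge0$ has the same $\cS'$-limit $\mu$ as $W_\hbar[R_\hbar]$ (the smoothing semigroup converges to the identity in the relevant sense), so $\mu\ge0$ as a distributional limit of nonnegative functions, and it has total mass $\le1$ by lower semicontinuity of mass under weak-$*$ limits — to get mass exactly $1$ one needs tightness, which follows from the second-moment control provided by part (2): $\int(|x|^2+|\xi|^2)\widetilde W_\hbar[R_\hbar]$ is bounded along the sequence precisely when $E_\hbar(p,R_\hbar)$ stays bounded, which we may assume (otherwise $\varliminf E_\hbar=+\infty$ and there is nothing to prove). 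Granting $\mu$ is a probability measure, the inequality $\MKd(p,\mu)\le\varliminf_{\hbar\to0}E_\hbar(p,R_\hbar)$ is a lower-semicontinuity statement: combine part (2), which gives $E_\hbar(p,R_\hbar)^2\ge\MKd(p,\widetilde W_\hbar[R_\hbar])^2-\tfrac12 d\hbar$, with the standard stability of $\MKd$ under weak convergence of probability measures with uniformly bounded second moments — $\widetilde W_\hbar[R_\hbar]\to\mu$ weakly-$*$ (upgraded to weak convergence of probability measures by the tightness just obtained), hence $\varliminf_{\hbar\to0}\MKd(p,\widetilde W_\hbar[R_\hbar])\ge\MKd(p,\mu)$ — and let $\hbar\to0$ so that the $\tfrac12 d\hbar$ term vanishes.

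\textit{Main obstacle.} The bookkeeping of the $\tfrac14\hbar\Dlt\phi$ Weyl/anti-Wick corrections in parts (1) and (2) is the place where sign and constant errors would creep in: one must be consistent about whether the cost is compared on the diagonal $(x,\xi)$ versus a second variable $(x',\xi')$, and track that both the $|x-y|^2$-in-$y$ and the $|\xi+i\hbar\grad_y|^2$-in-$y$ pieces each produce a $\tfrac14\hbar\cdot(2d)$ shift of opposite or same sign depending on the formula used, the net being $+\tfrac12 d\hbar$ in both (1) and (2). In part (3) the genuinely delicate point is upgrading $\cS'$-convergence to bona fide weak convergence of probability measures (i.e.\ ruling out escape of mass to infinity), which is why the second-moment bound extracted from part (2) is not a technicality but the crux of the argument; I would make sure to state explicitly that one may assume $\sup_\hbar E_\hbar(p,R_\hbar)<\infty$, since otherwise the inequality is trivial.
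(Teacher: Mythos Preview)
Your arguments for the universal lower bound, part (1), and part (3) follow the same line as the paper's. For part (2), however, you take a genuinely different route: you construct an explicit classical coupling $\Pi$ of $p$ and $\widetilde W_\hbar[R]$ by pushing each $Q(x,\xi)$ through the Husimi transform, whereas the paper argues by Kantorovich duality --- taking $a,b\in C_b(\bR^d\times\bR^d)$ with $a(x,\xi)+b(y,\eta)\le\Gamma(x,\xi,y,\eta)$, T\"oplitz-quantizing this inequality to obtain $a(x,\xi)I_\fH+\Op^T_\hbar(b)\le c_\hbar(x,\xi)+\tfrac12 d\hbar\, I_\fH$, tracing against $Q(x,\xi)$, and then maximizing over $a,b$. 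Your primal construction is more direct and avoids invoking the duality theorem; the paper's dual argument has the advantage of needing no regularity of $\widetilde W_\hbar[Q(x,\xi)]$ beyond what is encoded in bounded continuous test functions.

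That said, your part (2) contains a sign error in exactly the spot you flagged. Since $\Op^T_\hbar(\Gamma(x,\xi,\cdot,\cdot))=c_\hbar(x,\xi)+\tfrac12 d\hbar\, I_\fH$ by \eqref{OpTQuad}, formula \eqref{TrOpTR} gives
\[
\int_{\bR^d\times\bR^d}\Gamma(x,\xi,x',\xi')\,\widetilde W_\hbar[Q(x,\xi)](x',\xi')\,dx'd\xi'
=\Tr\bigl(c_\hbar(x,\xi)Q(x,\xi)\bigr)+\tfrac12 d\hbar\,p(x,\xi)\,,
\]
with a \emph{plus} sign, not a minus: the Husimi transform smears mass outward, so its second moments exceed the quantum expectations. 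With the corrected sign your coupling argument yields precisely $\MKd(p,\widetilde W_\hbar[R])^2\le E_\hbar(p,R)^2+\tfrac12 d\hbar$, which is the desired inequality. Had your minus sign been correct, you would have proved the stronger statement $E_\hbar^2\ge\MKd^2+\tfrac12 d\hbar$, which is false (take $R=\Op^T_\hbar((2\pi\hbar)^d p)$ and combine with part (1) to force $\widetilde W_\hbar[R]=p$, which is generically untrue). So the approach is sound; only the sign needs fixing.
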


The main result in this section is a quantitative estimate of the convergence rate of the solution $R_\hbar$ of the Hartree equation to the solution $f$ of the Vlasov equation in terms of the pseudo-distance $E_\hbar$ and, as a by-product, 
in terms of the Monge-Kantorovich distance between $f$ and the Husimi function of $R_\hbar$. We recall that this limit has been formulated in terms of the Wigner transform of $R_\hbar$ in \cite{LionsPaul} (Theorem IV.2). This is the
right vertical arrow in the diagram of section \ref{S-Intro}.

\begin{Thm}\lb{T-HV}
Let $V$ be an even, real-valued, bounded function of class $C^{1,1}$ on $\bR^d$. Denote by $L$ the Lipschitz constant of $\grad V$ and let $\Lambda=1+\max(1,4L^2)$. Let $f^{in}$ be a probability density on $\bR^d\times\bR^d$ such 
that
$$
\int_{\bR^d\times\bR^d}(|x|^2+|\xi|^2)f^{in}(x,\xi)dxd\xi<\infty\,,
$$
and let $f$ be the solution of the Vlasov equation (\ref{Vlasov}) with initial data $f^{in}$. Let $R^{in}_\hbar\in\cD(\fH)$, and let $R_\hbar$ be the solution of (\ref{Hartree}) with initial data $R^{in}_\hbar$. 

Then, for each $t\ge 0$, one has
$$
E_\hbar(f(t),R_\hbar(t))^2\le e^{\Lambda t}E_\hbar(f^{in},R^{in}_\hbar)^2\,.
$$
In particular, for all $t\ge 0$, one has
$$
\MKd(f(t),\widetilde W_\hbar[R_\hbar(t)])^2\le e^{\Lambda t}E_\hbar(f^{in},R_\hbar^{in})^2+\tfrac12d\hbar\,.
$$

Moreover, if $R^{in}_\hbar=\Op^T_\hbar((2\pi\hbar)^d\mu^{in})$, where $\mu^{in}$ is a Borel probability measure on $\bR^d\times\bR^d$, then
$$
\MKd(f(t),\widetilde W_\hbar[R_\hbar(t)])^2\le e^{\Lambda t}\left(\MKd(f^{in},\mu^{in})^2+\tfrac12d\hbar\right)+\tfrac12d\hbar\,.
$$
\end{Thm}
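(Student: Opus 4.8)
The plan is to carry out a quantum version of Dobrushin's coupling/stability argument in the Eulerian form suggested in the introduction. We may assume $E_\hbar(f^{in},R_\hbar^{in})<\infty$, since otherwise there is nothing to prove. Let $\Phi_t$ be the characteristic flow of the Vlasov equation \eqref{Vlasov}, i.e. the Hamiltonian flow of $(x,\xi)\mapsto\tfrac12|\xi|^2+V\star\rho_f(t,x)$; since $\grad V$ is bounded and Lipschitz this flow is globally defined and preserves the Lebesgue measure on $\bR^d\times\bR^d$, and $f(t)=f^{in}\circ\Phi_{-t}$. Let $t\mapsto U_\hbar(t)$ be the unitary propagator generated by the (time-dependent, but now linear) self-consistent Hamiltonian $H_\hbar(t):=-\tfrac12\hbar^2\Dlt_y+(V\star\rho[R_\hbar])(t,\cdot)$, so that $R_\hbar(t)=U_\hbar(t)R_\hbar^{in}U_\hbar(t)^*$. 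Given $\eps>0$, choose a coupling $Q^{in}\in\cC(f^{in},R_\hbar^{in})$ with $\int\Tr(c_\hbar Q^{in})\le E_\hbar(f^{in},R_\hbar^{in})^2+\eps$ and transport it along both flows,
$$
Q(t,x,\xi):=U_\hbar(t)\,Q^{in}\big(\Phi_{-t}(x,\xi)\big)\,U_\hbar(t)^*\,.
$$
Invariance of Lebesgue measure under $\Phi_t$ and of the trace under unitary conjugation show that $Q(t)\in\cC(f(t),R_\hbar(t))$, and a direct computation gives that $Q(t)$ solves $\d_tQ+\{\tfrac12|\xi|^2+V\star\rho_f,Q\}=\tfrac1{i\hbar}[H_\hbar(t),Q]$. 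Consequently, setting $D(t):=\int\Tr(c_\hbar(x,\xi)Q(t,x,\xi))\,dxd\xi$, we have $E_\hbar(f(t),R_\hbar(t))^2\le D(t)$ for all $t$, so it suffices to establish the differential inequality $\dot D(t)\le\Lambda D(t)$: Gr\"onwall then yields $E_\hbar(f(t),R_\hbar(t))^2\le e^{\Lambda t}(E_\hbar(f^{in},R_\hbar^{in})^2+\eps)$, and $\eps\downarrow0$ closes the first inequality.

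Differentiating $D$, using the equation for $Q$, the cyclicity of the trace (to move $H_\hbar(t)$ onto $c_\hbar$), and integration by parts in $(x,\xi)$ (to move the Poisson bracket onto $c_\hbar$), one gets
$$
\dot D(t)=\int\Tr\Big(\big(\{\tfrac12|\xi|^2+V\star\rho_f,c_\hbar\}+\tfrac1{i\hbar}[c_\hbar,H_\hbar(t)]\big)\,Q(t,x,\xi)\Big)\,dxd\xi\,.
$$
Write $p:=-i\hbar\grad_y$, so that $\xi+i\hbar\grad_y=\xi-p$, $c_\hbar=\tfrac12|x-y|^2+\tfrac12|\xi-p|^2$, and $H_\hbar(t)=\tfrac12|p|^2+(V\star\rho[R_\hbar])(t,y)$. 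The algebraic heart of the matter is that the ``free streaming'' contributions cancel: $[\tfrac12|\xi-p|^2,\tfrac12|p|^2]=0$ because both depend only on $p$, and $[\tfrac12|x-y|^2,(V\star\rho[R_\hbar])(t,y)]=0$ because both depend only on $y$; the only surviving commutators are the ``cross'' terms $[\tfrac12|x-y|^2,\tfrac12|p|^2]$ and $[\tfrac12|\xi-p|^2,(V\star\rho[R_\hbar])(t,y)]$. Collecting these together with $\{\tfrac12|\xi|^2+V\star\rho_f,c_\hbar\}=\xi\cdot(x-y)-(\grad V\star\rho_f)(t,x)\cdot(\xi-p)$, and cancelling the anti-self-adjoint corrections (all proportional to $\hbar$ and traceable to $[y_j,p_j]=i\hbar$), the integrand collapses to the operator transcription of the classical Dobrushin integrand,
$$
\{\tfrac12|\xi|^2+V\star\rho_f,c_\hbar\}+\tfrac1{i\hbar}[c_\hbar,H_\hbar(t)]=\mathrm{Re}\big((x-y)\cdot(\xi-p)\big)+\mathrm{Re}\big((\xi-p)\cdot\big((\grad V\star\rho[R_\hbar])(t,y)-(\grad V\star\rho_f)(t,x)\big)\big)\,,
$$
where $\mathrm{Re}\,A:=\tfrac12(A+A^*)$ and each term on the right is self-adjoint.

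It remains to bound the two terms. Put $D_x(t):=\tfrac12\int\Tr(|x-y|^2Q)$ and $D_\xi(t):=\tfrac12\int\Tr(|\xi-p|^2Q)$, so $D=D_x+D_\xi$. The operator Cauchy--Schwarz inequality $\mathrm{Re}(A\cdot B)\le\tfrac12(|A|^2+|B|^2)$ for vectors of self-adjoint operators (from $\sum_j(A_j-B_j)^2\ge0$) gives, for the first term, $\int\Tr(\mathrm{Re}((x-y)\cdot(\xi-p))Q)\le\int\Tr(c_\hbar Q)=D$. For the second term the key point is to estimate the density difference \emph{through the coupling $Q$ itself}: since for fixed $z,x$ the operator $\grad V(z-x)I-\grad V(z-y)$ is a multiplication operator dominated by $L|x-y|$, one finds $\|\grad V\star\rho_f(t,\cdot)-\grad V\star\rho[R_\hbar](t,\cdot)\|_{L^\infty}\le L\int\Tr(|x-y|Q)\le L(2D_x)^{1/2}$ by Cauchy--Schwarz (recall $\int\Tr Q=1$); combining this with the $L$-Lipschitz continuity of $\grad V\star\rho_f(t,\cdot)$ yields the operator bound $|(\grad V\star\rho[R_\hbar])(t,y)-(\grad V\star\rho_f)(t,x)|\le L(2D_x)^{1/2}I+L|x-y|$, whence, after expanding the square and using $\int\Tr(|x-y|Q)\le(2D_x)^{1/2}$ once more, $\int\Tr(|(\grad V\star\rho[R_\hbar])(t,y)-(\grad V\star\rho_f)(t,x)|^2Q)\le 8L^2D_x$. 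Applying operator Cauchy--Schwarz again bounds the second term by $D_\xi+4L^2D_x$. Altogether $\dot D\le D+D_\xi+4L^2D_x=(1+4L^2)D_x+2D_\xi\le\max(1+4L^2,2)(D_x+D_\xi)=\big(1+\max(1,4L^2)\big)D=\Lambda D$, which is the desired inequality. The two remaining estimates then follow by combining it with Theorem \ref{T-PtyE}: part (2) gives $\MKd(f(t),\widetilde W_\hbar[R_\hbar(t)])^2\le E_\hbar(f(t),R_\hbar(t))^2+\tfrac12d\hbar$, and when $R_\hbar^{in}=\Op^T_\hbar((2\pi\hbar)^d\mu^{in})$, part (1) gives $E_\hbar(f^{in},R_\hbar^{in})^2\le\MKd(f^{in},\mu^{in})^2+\tfrac12d\hbar$.

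I expect the genuinely delicate issues to be two. First, the rigorous justification of differentiating $D(t)$ under the integral and the trace: $c_\hbar$ is unbounded and is only controlled as a quadratic form, so one must first propagate the finiteness of the second moments of $Q(t)$ — equivalently of $\Tr(|y|^2R_\hbar(t))$, of $\Tr(|p|^2R_\hbar(t))$ (this last one controlled by conservation of the Hartree energy plus boundedness of $V$), and of the moments of $f(t)$ (controlled by boundedness of $\grad V$) — and then pass to the limit from a truncated/regularized version of $c_\hbar$. Second, the precise bookkeeping of the anti-self-adjoint (``imaginary'') parts of the commutators $\tfrac1{i\hbar}[\tfrac12|x-y|^2,\tfrac12|p|^2]$ and $\tfrac1{i\hbar}[\tfrac12|\xi-p|^2,(V\star\rho[R_\hbar])(t,y)]$: these terms, together with the scalar multiples of $i\hbar$ coming from $[y_j,p_j]=i\hbar$, must cancel exactly so as to leave the manifestly real, classical-looking integrand displayed above, and it is here — in getting every sign, ordering and constant right — that the computation is most error-prone.
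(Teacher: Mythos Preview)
Your proof is correct and follows essentially the same strategy as the paper: transport an initial coupling by the joint Vlasov/Hartree flow, differentiate the cost functional $D(t)$, reduce via the commutator/Poisson-bracket algebra to the symmetrized expression $\mathrm{Re}((x-y)\cdot(\xi-p))+\mathrm{Re}((\xi-p)\cdot(\text{force difference}))$, and close with the operator Cauchy--Schwarz inequality together with the key step of bounding the mean-field force difference through the coupling itself. The only cosmetic difference is the choice of intermediate point in the triangle inequality for the force term: the paper splits at $\grad V\star\rho[R_\hbar](t,x)$ and uses the Lipschitz continuity of $\grad V\star\rho[R_\hbar]$, while you split (implicitly) at $\grad V\star\rho_f(t,y)$ and use the Lipschitz continuity of $\grad V\star\rho_f$; both routes yield exactly $4L^2D_x+D_\xi$ for that term and hence the same constant $\Lambda$. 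One small remark: your caution about ``cancelling the anti-self-adjoint corrections'' is unnecessary, since $\tfrac1{i\hbar}[c_\hbar,H_\hbar(t)]$ is automatically self-adjoint and the computation lands directly on the symmetrized form with no leftover terms; the paper's explicit bookkeeping confirms this.
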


\smallskip
We do not assume that the initial data $f^{in}$ is smooth, or that $[\Dlt,R^{in}_\hbar]$ is a trace-class operator on $\fH$. Hence the solutions $t\mapsto f(t,x,\xi)$ and $t\mapsto R_\hbar(t)$ of the Vlasov and the Hartree equations considered
in the statement above are not classical solutions, but weak solutions. Since $V$ is of class $C^1$ with Lipschitz continuous gradient on $\bR^d$, the characteristic flow of the Vlasov equation (\ref{Vlasov}) is defined globally by the 
Cauchy-Lipschitz theorem, and the Cauchy problem (\ref{Vlasov}) has a unique solution obtained as the push-forward of $f^{in}$ under this flow. The solution $t\mapsto R_\hbar(t)$ of the Cauchy problem (\ref{Hartree}) is obtained similarly
as the conjugate of $R^{in}_\hbar$ by some time-dependent unitary operator (see \cite{BoveDPF}, especially Proposition 4.3 there). This remark also applies to the statements of Theorems \ref{T-NSV} and \ref{T-SL} below and will 
not be repeated. 

\smallskip
Notice that, because of statement (3) in Theorem \ref{T-PtyE}, if the Wigner transform of $R_\hbar(t)$ satisfies 
$$
W_\hbar[R_\hbar(t)]\to\mu(t)\quad\hbox{ in }\cS'(\bR^d\times\bR^d)
$$
for all $t\ge 0$ as $\hbar\to 0$, then the last inequality in Theorem \ref{T-HV} leads to
$$
\MKd(f(t),\mu(t))\le\MKd(f^{in},\mu^{in})e^{\L t/2}\,,\qquad t\ge 0\,.
$$
Since $\mu$ is a weak solution of the Vlasov equation by Theorem IV.2 in \cite{LionsPaul}, this inequality is precisely the analogue of Dobrushin's inequality in \cite{Dobru}, formulated in termes of the Monge-Kantorovich distance of 
exponent $2$, instead of the Monge-Kantorovich distance of exponent $1$ as in \cite{Dobru}. (That the Monge-Kantorovich distance of exponent $2$ can be used in Dobrushin's argument has been observed by Loeper \cite{Loeper}; 
see also \cite{FGMouPaul} for an extension to Monge-Kantorovich distances of arbitrary exponents.)


\subsection{From the $N$-Body Schr\"odinger Equation to the Vlasov Equation}\label{FNSTV}


In this section, we explain how the Vlasov equation, i.e. the mean-field theory of large particle systems in classical mechanics, can be deduced from the $N$-body Schr\"odinger equation in the limit of large $N$ and small $\hbar$. In other
words, we combine the classical limit discussed in the previous section with the mean-field limit in quantum mechanics, in which the Hartree equation is deduced from the linear $N$-body Schr\"odinger equation. Since the mean-field limit
of the $N$-body Schr\"odinger equation is uniform in the limit as $\hbar\to 0$ according to Theorem 2.4 in \cite{FGMouPaul}, this combined limit is a straightforward consequence of the quantitative estimate for the classical limit obtained
in Theorem \ref{T-HV} above, at least when formulated in terms of Monge-Kantorovich distances and Husimi transforms.

In this section, we propose a slightly different approach, and estimate directly the difference between the single-particle marginal of the $N$-particle density operator and the Vlasov solution in terms of the $E_\hbar$ functional. This estimate
combines the ideas used in the proof of Theorem \ref{T-HV} above, with those of \cite{FGMouPaul}.

Before stating our main result, we introduce some elements of notation pertaining to large particle systems.

Firstly, the particles considered in the present work are indistinguishable. The mathematical formulation of this property is the following symmetry condition. For each $N>1$, let $\fS_N$ be the group of permutations of the set $\{1,\ldots,N\}$.
For each $\si\in\fS_N$ and each $X_N:=(x_1,\ldots,x_N)\in(\bR^d)^N$, we denote
$$
\si\cdot X_N:=(x_{\si(1)},\ldots,x_{\si(N)})\,.
$$
With $\fH:=L^2(\bR^d)$ as in the previous section, we set $\fH_N:=\fH^{\otimes N}=L^2((\bR^d)^N)$. For each $\psi\in\fH_N$ and each $\si\in \fS_N$, we set
$$
U_\si\psi_N(X_N):=\psi_N(\si\cdot X_N)\,.
$$
Obviously
$$
U_\si^*=U_{\si^{-1}}=U_\si^{-1}
$$
so that $U_\si$ is a unitary operator on $\fH_N$. A density operator for a system of $N$ indistinguishable particles is an element $R_N\in\cD(\fH_N)$ satisfying the symmetry  relation
$$
U_\si R_NU_\si^*=R\quad\hbox{ for each }\si\in\fS_N\,.
$$
The subset of elements of $\cD(\fH_N)$ satisfying this symmetry condition is henceforth denoted $\cD^s(\fH_N)$.

Secondly, we recall the notion of $n$-body marginal of an element of $\cD(\fH_N)$ for all $n=1,\ldots,N$. For each $R_N\in\cD(\fH_N)$ and each $n=1,\ldots,N$, we define $R_N^\mathbf{n}$ to be the unique element in $\cD(\fH_n)$
such that
\be\lb{nQMargi}
\Tr_{\fH_n}(R_N^\mathbf{n}(A_1\otimes\ldots\otimes A_n))=\Tr_{\fH_N}(R_N(A_1\otimes\ldots\otimes A_n\otimes I_\fH\otimes\ldots\otimes I_\fH))
\ee
for each $A_1,\ldots,A_n\in\cL(\fH)$. Equivalently
$$
\Tr_{\fH_n}(R_N^\mathbf{n}B_n)=\Tr_{\fH_N}(R_N(B_n\otimes I_{\fH_{N-n}}))
$$
for each $B_n\in\cL(\fH_n)$, using the identification $\fH_N\simeq\fH_n\otimes\fH_{N-n}$. Clearly
$$
R_N\in\cD^s(\fH_N)\Rightarrow R_N^\mathbf{n}\in\cD^s(\fH_n)\,.
$$

At this point, we introduce the $N$-particle dynamics. Let the interaction potential $V$ be an even, real-valued function of class $C^{1,1}$ on $\bR^d$, and consider the $N$-particle quantum Hamiltonian
\be\lb{QHamN}
\cH_{\hbar,N}:=\sum_{j=1}^N-\tfrac12\hbar^2\Dlt_{x_j}+\frac1{2N}\sum_{j,k=1}^NV(x_j-x_k)\,.
\ee
Assuming that the state of the $N$-particle system at time $t=0$ is given by the symmetric density operator $R^{in}_{\hbar,N}\in\cD^s(\fH_N)$, the state at time $t>0$ of that same system is given by the propagated density operator
\be\lb{NqDens(t)}
R_{\hbar,N}(t)=e^{-it\cH_{\hbar,N}/\hbar}R^{in}_{\hbar,N}e^{it\cH_{\hbar,N}/\hbar}\,,\qquad t\ge 0\,.
\ee
Observe that $U_\si\cH_{\hbar,N}=\cH_{\hbar,N}U_\si$ for each $\si\in\fS_N$, so that 
$$
U_\si e^{it\cH_{\hbar,N}/\hbar}=e^{it\cH_{\hbar,N}/\hbar}U_\si
$$ 
for each $t\in\bR$ and each $\si\in\fS_N$. As a result, the symmetry property of $R^{in}_{\hbar,N}$ is propagated by the dynamics, i.e.
$$
U_\si R_{\hbar,N}(t)U_\si^*=R_{\hbar,N}(t)\quad\hbox{ for each }t\ge 0\hbox{ and each }\si\in\fS_N\,.
$$

\begin{Thm}\lb{T-NSV}
Let $V$ be an even, real-valued function of class $C^{1,1}$ on $\bR^d$. Denote by $L$ the Lipschitz constant of $\grad V$ and let $\Gamma=2+\max(4L^2,1)$. Let $R^{in}_{\hbar,N}\in\cD^s(\fH_N)$ and let $R_{\hbar,N}(t)$ be given by 
(\ref{NqDens(t)}) for each $t\ge 0$. Let $f^{in}$ be a probability density on $\bR^d\times\bR^d$ such that
$$
\int_{\bR^d\times\bR^d}(|x|^2+|\xi|^2)f^{in}(x,\xi)dxd\xi<\infty\,,
$$
and let $f$ be the solution of the Vlasov equation (\ref{Vlasov}) with initial data $f^{in}$. Then
$$
\frac1nE_\hbar(f(t)^{\otimes n},R_{\hbar, N}^\mathbf{n}(t))^2\le\frac1NE_\hbar((f^{in})^{\otimes N},R^{in}_{\hbar, N})^2e^{\Gamma t}+\frac{4\|\grad V\|^2_{L^\infty}}{N-1}\frac{e^{\Gamma t}-1}{\Gamma}
$$
for each $n=1,\ldots,N$ and each $t\ge 0$. In particular, for each $n=1,\ldots,N$ and each $t\ge 0$, one has
$$
\ba
\frac1n\MKd(f(t)^{\otimes n},\widetilde W_\hbar[R_{\hbar, N}^\mathbf{n}(t)])^2
\\
\le\frac1NE_\hbar((f^{in})^{\otimes N},R^{in}_{\hbar, N})^2e^{\Gamma t}+\frac{4\|\grad V\|^2_{L^\infty}}{N-1}\frac{e^{\Gamma t}-1}{\Gamma}+\tfrac12d\hbar&\,.
\ea
$$
If moreover $R_{\hbar,N}^{in}$ is the T\"oplitz operator at scale $\hbar$ with symbol $(2\pi\hbar)^{dN}\mu^{in}_N$ where $\mu^{in}_N$ is a symmetric Borel probability measure on $(\bR^d)^N$, i.e. 
$$
R_{\hbar,N}^{in}=\Op^T_\hbar((2\pi\hbar)^{dN}\mu^{in}_N)\,,
$$
one has
$$
\ba
\frac1n\MKd(f(t)^{\otimes n},\widetilde W_\hbar[R_{\hbar, N}^\mathbf{n}(t)])^2
\\
\le\left(\frac1N\MKd((f^{in})^{\otimes N},\mu^{in}_N)^2+\tfrac12d\hbar\right)e^{\Gamma t}+\frac{4\|\grad V\|^2_{L^\infty}}{N-1}\frac{e^{\Gamma t}-1}{\Gamma}+\tfrac12d\hbar&\,.
\ea
$$
\end{Thm}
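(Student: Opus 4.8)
\emph{Proof plan.}
The plan is to bring the whole statement down to a single Gr\"onwall inequality for the full $N$-body transportation cost, following the pattern of the proof of Theorem \ref{T-HV} but with the Hartree self-consistent potential replaced by the true $N$-body interaction --- which is what produces the extra consistency term --- and then to descend to the $n$-body marginals. The marginal reduction is this: if $Q_N(t,\cdot)$ is a \emph{symmetric} coupling in $\cC(f(t)^{\otimes N},R_{\hbar,N}(t))$, then integrating out $z_{n+1},\dots,z_N$ and taking the partial trace over the last $N-n$ copies of $\fH$ produces an element of $\cC(f(t)^{\otimes n},R_{\hbar,N}^{\mathbf{n}}(t))$ whose transportation cost equals $\tfrac{n}{N}\,\mathfrak D_N(t)$, where $\mathfrak D_N(t):=\int\Tr\big(c_\hbar^{(N)}(Z_N)\,Q_N(t,Z_N)\big)\,dZ_N$ and $c_\hbar^{(N)}(Z_N):=\sum_{j=1}^N c_\hbar(z_j)$ acts on $\fH_N$ (the equality uses the $\fS_N$-invariance of $c_\hbar^{(N)}$ and of $Q_N(t,\cdot)$); hence $\tfrac1n E_\hbar(f(t)^{\otimes n},R_{\hbar,N}^{\mathbf{n}}(t))^2\le\tfrac1N\,\mathfrak D_N(t)$ for every $1\le n\le N$, so it is enough to bound $\mathfrak D_N$. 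To that end I would transport a near-optimal symmetric initial coupling by both dynamics at once: fix $\eta>0$, pick a symmetric $Q_N^{in}\in\cC((f^{in})^{\otimes N},R_{\hbar,N}^{in})$ with $\int\Tr(c_\hbar^{(N)}Q_N^{in})\,dZ_N\le E_\hbar((f^{in})^{\otimes N},R_{\hbar,N}^{in})^2+\eta$ (symmetrising is free), and set
\be
Q_N(t,Z_N):=e^{-it\cH_{\hbar,N}/\hbar}\,Q_N^{in}\big(\Phi_{-t}^{\otimes N}Z_N\big)\,e^{it\cH_{\hbar,N}/\hbar}\,,
\ee
where $\Phi_t$ is the characteristic flow of the Vlasov equation (\ref{Vlasov}) and $\Phi_t^{\otimes N}$ acts blockwise. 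Since the Vlasov vector field is divergence-free in $(x,\xi)$, $\Phi_t$ preserves Lebesgue measure, so $\Tr Q_N(t,Z_N)=f(t)^{\otimes N}(Z_N)$; since $e^{it\cH_{\hbar,N}/\hbar}$ commutes with the $U_\si$, $Q_N(t,\cdot)$ stays symmetric and $\int Q_N(t,Z_N)\,dZ_N=R_{\hbar,N}(t)$; thus $Q_N(t,\cdot)\in\cC(f(t)^{\otimes N},R_{\hbar,N}(t))$.

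Next I would differentiate $\mathfrak D_N$. After the substitution $Z_N=\Phi_t^{\otimes N}W_N$ the time dependence is carried only by the conjugation by $e^{-it\cH_{\hbar,N}/\hbar}$ and by the classical characteristics inside $c_\hbar^{(N)}$; differentiating and undoing the substitution gives, exactly as in the proof of Theorem \ref{T-HV},
\be
\frac{d}{dt}\mathfrak D_N(t)=\int\Tr\Big(\sum_{j=1}^N\big[\tfrac i\hbar[\cH_{\hbar,N},c_\hbar(z_j)]+\xi_j\cdot\grad_{x_j}c_\hbar(z_j)-\grad(V\star_x\rho_f)(t,x_j)\cdot\grad_{\xi_j}c_\hbar(z_j)\big]\,Q_N(t,Z_N)\Big)\,dZ_N
\ee
(here $c_\hbar(z_j)$ acts on the $j$-th group of variables, denoted $y_j$). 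The crucial fact is that each summand of the bracketed operator equals, \emph{exactly}, the symmetrised product $\tfrac12\big((\xi_j+i\hbar\grad_{y_j})\cdot B_j+B_j\cdot(\xi_j+i\hbar\grad_{y_j})\big)$, where
\be
B_j:=(x_j-y_j)+\tfrac1N\sum_{k=1}^N(\grad V)(y_j-y_k)-\grad(V\star_x\rho_f)(t,x_j)\,.
\ee
Indeed, in $[\cH_{\hbar,N},c_\hbar(z_j)]$ only the $j$-th kinetic factor contributes, and the pair interaction sees $c_\hbar(z_j)$ only through $\tfrac1N\sum_k(\grad V)(y_j-y_k)$ (since $\tfrac12|x_j-y_j|^2$ commutes with it), while the $i\hbar$-corrections reorganise themselves into the symmetriser, with no remainder. (Only the one- and two-body marginals of $Q_N(t,\cdot)$ enter here; this is the incarnation, for the present problem, of the first equation in the BBGKY hierarchy of the quantum $N$-body problem.)

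Then I would estimate the integrand. Split $B_j=(x_j-y_j)+\tfrac1N\sum_k\big[(\grad V)(y_j-y_k)-(\grad V)(x_j-x_k)\big]+G_j(X_N)$ with the \emph{scalar} remainder $G_j(X_N):=\tfrac1N\sum_k(\grad V)(x_j-x_k)-\grad(V\star_x\rho_f)(t,x_j)$. Using the elementary inequality $\tfrac12(AC+CA)\le\th A^2+\th^{-1}C^2$ for self-adjoint $A,C$, the pointwise bounds $|x_j-y_j|^2,\,|\xi_j+i\hbar\grad_{y_j}|^2\le 2c_\hbar(z_j)$, the Lipschitz estimate $|(\grad V)(y_j-y_k)-(\grad V)(x_j-x_k)|\le L(|x_j-y_j|+|x_k-y_k|)$ (pointwise in $y$), and summing over $j$ with $\th$ chosen appropriately (the sharp choice optimising the coefficient coming from the Lipschitz term), the $(x_j-y_j)$-part and the Lipschitz part of $B_j$ contribute at most $\Gamma\,c_\hbar^{(N)}$ to the integrand, $\Gamma$ as in the statement. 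The scalar part contributes $\le c_\hbar(z_j)+\tfrac12|G_j(X_N)|^2 I$; after tracing against $Q_N(t,\cdot)$, integrating, and using $\Tr Q_N(t,Z_N)=f(t)^{\otimes N}(Z_N)$, the last term becomes the consistency defect $\tfrac12\sum_j\int|G_j(X_N)|^2 f(t)^{\otimes N}(Z_N)\,dZ_N$. Here the $x_k$ are i.i.d.\ with law $\rho_f(t)$ and $(\grad V)(0)=0$ (as $V$ is even), so conditioning on $x_j$ gives $\int|G_j|^2 f(t)^{\otimes N}\le\tfrac1N\|\grad V\|^2_{L^\infty}$, hence $\sum_j\int|G_j|^2 f(t)^{\otimes N}\le\|\grad V\|^2_{L^\infty}$, \emph{uniformly in $t$} --- which is exactly what the tensor-power form of $f(t)^{\otimes N}$ buys. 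Collecting everything, one reaches $\tfrac{d}{dt}\mathfrak D_N(t)\le\Gamma\,\mathfrak D_N(t)+\tfrac{4N}{N-1}\|\grad V\|^2_{L^\infty}$; Gr\"onwall's lemma, division by $N$, and $\eta\to0$ then yield the first inequality, and the other two follow by feeding it into the $n$- and $N$-particle forms of Theorem \ref{T-PtyE}, namely $\MKd(p,\widetilde W_\hbar[R])^2\le E_\hbar(p,R)^2+\tfrac12 dn\hbar$ and $E_\hbar(p,\Op^T_\hbar((2\pi\hbar)^{dN}\mu))^2\le\MKd(p,\mu)^2+\tfrac12 dN\hbar$, and dividing by $n$, resp.\ $N$.

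I expect the genuine obstacle to be twofold. First, the exact identity in the differentiation step --- that $\tfrac i\hbar[\cH_{\hbar,N},c_\hbar^{(N)}]$ together with the Vlasov drift field collapses, with no $O(\hbar)$ remainder, onto the symmetrised products built from the $B_j$, leaving only the mean-field$/$pair-interaction force defect $\tfrac1N\sum_k(\grad V)(y_j-y_k)-\grad(V\star_x\rho_f)(t,x_j)$ --- rests on the precise algebraic forms of $\cH_{\hbar,N}$ and $c_\hbar$ and must be checked carefully. Second, making the differentiation under the integral sign, the operator inequalities for the unbounded $c_\hbar^{(N)}$, and the identity $\Tr Q_N(t,\cdot)=f(t)^{\otimes N}$ rigorous for the (merely weak) solutions in play requires a regularisation$/$approximation argument, in the spirit of the one behind Theorem \ref{T-HV}. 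Once these are in place, the consistency estimate itself (the variance bound for $G_j$) is a one-line computation.
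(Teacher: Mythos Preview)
Your proposal is correct and follows essentially the same route as the paper: propagate a symmetric initial coupling by the product of the Vlasov flow and the $N$-body Schr\"odinger conjugation, differentiate the total transportation cost, obtain a closed Gr\"onwall inequality from a stability estimate (Lipschitz bound on $\grad V$) plus a consistency estimate (variance bound for the mean-field defect under the factorised law $f(t)^{\otimes N}$), and then pass to marginals via symmetry. The paper organises the computation slightly differently --- it first reduces to the one-particle marginal equation (the first BBGKY equation (\ref{BBGKY1})) for the stability part, and then ``undoes the symmetry reduction'' and returns to the full $N$-body coupling only for the consistency part --- whereas you stay at the $N$-body level throughout; but the algebra and the resulting differential inequality are the same.

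Two small bookkeeping points. First, your sentence ``the $(x_j-y_j)$-part and the Lipschitz part of $B_j$ contribute at most $\Gamma\,c_\hbar^{(N)}$'' overshoots: those two pieces give $(1+\max(4L^2,1))\,c_\hbar^{(N)}=(\Gamma-1)\,c_\hbar^{(N)}$, and it is the extra $c_\hbar^{(N)}$ coming from your scalar $G_j$-piece that makes up the missing $1$; the total is then $\Gamma$, so your final differential inequality is right even though the intermediate attribution is off. Second, your own variance computation gives $\tfrac12\sum_j\int|G_j|^2 f(t)^{\otimes N}\le\tfrac12\|\grad V\|_{L^\infty}^2$, which is sharper than the constant $\tfrac{4N}{N-1}\|\grad V\|_{L^\infty}^2$ you record (and sharper than the paper's bound, which uses the cruder estimate $|\cV|\le 2\|\grad V\|_{L^\infty}$); since you only need the weaker one for the stated theorem this causes no trouble, but the mismatch is worth noting.
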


The result in Theorem \ref{T-NSV} is the diagonal arrow in the diagram of section \ref{S-Intro}. It is natural to consider the quantity $\frac1NE_\hbar(F^{in}_N,R^{in}_{\hbar,N})^2$ since the square of the ``distance" $E_\hbar$ between
symmetric $N$-particle densities (quantum and classical) grows linearly with the particle number $N$.


\subsection{From the $N$-Body Schr\"odinger Equation to the $N$-Body Liouville Equation}


As a by-product of the methods introduced to prove Theorems \ref{T-HV} and \ref{T-NSV}, we establish the validity of the classical limit of the $N$-body Schr\"odinger equation to the $N$-body Liouville equation as $\hbar\to 0$, i.e. the 
left vertical arrow in the diagram of section \ref{S-Intro}. This limit is of course well known for each $N\ge 1$ (see for instance Theorem IV.2 in \cite{LionsPaul}); the novelty in the approach presented here is that this limit is proved to 
be \textit{uniform as} $N\to\infty$. More precisely, we seek to compare the evolved density operator $R_{\hbar,N}(t)$ defined by (\ref{NqDens(t)}) in the previous section \ref{FNSTV}, and the solution $F_N(t,X_N,\Xi_N)$ of the Liouville equation
\be\lb{CPLiou}
\left\{
\ba
{}&\d_t F_N+\sum_{k=1}^N\xi_k\cdot\grad_{x_k}F_N-\frac1{N}\sum_{k,l=1}^N\grad V(x_k-x_l)\cdot\nabla_{\xi_k}F_N=0\,,
\\
&F_N\rstr_{t=0}=F_N^{in}\,.
\ea
\right.
\ee
In other words
\be\lb{CPLiouHam}
\left\{
\ba
{}&\d_t F_N+\{\bH_N,F_N\}_N=0\,,
\\
&F_N\rstr_{t=0}=F_N^{in}\,,
\ea
\right.
\ee
where 
\be\lb{CHamN}
\bH_N(X_N,\Xi_N):=\sum_{j=1}^N\tfrac12|\xi_j|^2+\tfrac1{2N}\sum_{j,k=1}^NV(x_j-x_k)
\ee
and where $\{\cdot,\cdot\}_N$ is the $N$-particle Poisson bracket on $(\bR^d\times\bR^d)^N$ defined by
$$
\{x_j,x_k\}_N=\{\xi_j,\xi_k\}_N=0\,,\quad\{\xi_j,x_k\}_N=\de_{jk}\,,\qquad j,k=1,\ldots,N\,.
$$

We obtain a uniform (in $N$) bound for the quantity
$$
\frac1NE_\hbar(F_N(t),R_{\hbar,N}(t))^2
$$
in terms of its value for $t=0$. For $n=1,\dots,N$, let $F_N^\mathbf{n}$ be the $n$-th marginal of $F_N$, as in the statement of Theorem \ref{T-NSV}.

\begin{Thm}\label{T-SL}
Let $V$ be an even, real-valued function of class $C^{1,1}$ on $\bR^d$ with bounded gradient. Denote by $L$ the Lipschitz constant of $\grad V$ and let $\L=1+\max(4L^2,1)$. Let $R^{in}_{\hbar,N}\in\cD^s(\fH_N)$ and let $R_{\hbar,N}(t)$ 
be given by (\ref{NqDens(t)}) for each $t\ge 0$. Let $F^{in}_N$ be a symmetric probability density on $(\bR^d\times\bR^d)^N$ such that
$$
\int_{(\bR^{d}\times\bR^{d})^N}(|X_N|^2+|\Xi_N|^2)F^{in}_N(X_N,\Xi_N)dX_Nd\Xi_N<\infty\,,
$$
and let $F_N$ be the solution of the $N$-body Liouville equation (\ref{CPLiou}) with initial data $F^{in}_N$. 

Then, for all $t\ge 0$ and all $n=1,\ldots,N$, 
$$
\frac1nE_\hbar(F^\mathbf{n}_N(t),R^\mathbf{n}_{\hbar,N}(t))^2\le\frac1NE_\hbar(F^{in}_N,R^{in}_{\hbar,N})^2e^{\L t}\,.
$$
In particular, 
$$
\frac1n\MKd(F^\mathbf{n}_N(t),\widetilde W_\hbar[R^\mathbf{n}_{\hbar,N}(t)])^2\le\frac1NE_\hbar(F^\mathbf{n}_N(t),R^\mathbf{n}_{\hbar,N}(t))^2e^{\L t}+\tfrac12d\hbar.
$$

Moreover if $R^{in}_{\hbar,N}=\Op^T_\hbar((2\pi\hbar)^{dN}\mu^{in}_N)$ where $\mu^{in}_N$ is a symmetric Borel probability measure on $(\bR^d\times\bR^d)^N$, then
$$
\ba
\frac1n\MKd(F^\mathbf{n}_N(t),\widetilde W_\hbar[R^\mathbf{n}_{\hbar,N}(t)])^2&
\\
\le\left(\frac1N\MKd(F^{in}_N,\mu^{in}_N)^2+\tfrac12d\hbar\right)e^{\L t}+\tfrac12d\hbar&\,,
\ea
$$
for all $t\ge 0$ and each $n=1,\ldots,N$. In the particular case where $\mu^{in}_N=F^{in}_N$, one finds that
$$
\frac1n\MKd(F^\mathbf{n}_N(t),\widetilde W_\hbar[R^\mathbf{n}_{\hbar,N}(t)])^2\le\tfrac12d\hbar(1+e^{\L t})
$$
for all $t\ge 0$ and each $n=1,\ldots,N$.
\end{Thm}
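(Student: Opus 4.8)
The plan is to prove the theorem by a Gronwall estimate on a (non-optimal) transport functional obtained by pushing a near-optimal initial coupling through the combined classical--quantum flow; this is the $N$-body, mean-field-free analogue of the Eulerian argument behind Theorem \ref{T-HV}. Since the right-hand side of the first inequality involves the \emph{full} $N$-body quantity at $t=0$, I would first reduce to the case $n=N$. Write $c^{(N)}_\hbar(X_N,\Xi_N):=\sum_{j=1}^N\tfrac12(|x_j-y_j|^2+|\xi_j+i\hbar\grad_{y_j}|^2)$ for the cost entering $E_\hbar$ on $(\bR^d\times\bR^d)^N$. Any coupling of two symmetric densities may be assumed $\fS_N$-invariant (averaging over $\fS_N$ does not change the cost, since $c^{(N)}_\hbar$ and the trace are permutation-invariant), and then its $n$-marginal --- partial trace over the last $N-n$ factors of $\fH_N$ followed by integration over the last $N-n$ phase-space pairs --- belongs to $\cC(F^\mathbf{n}_N,R^\mathbf{n}_N)$ with cost exactly $\tfrac nN$ times the original. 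This gives the ``marginal monotonicity'' $\tfrac1nE_\hbar(F^\mathbf{n}_N,R^\mathbf{n}_N)^2\le\tfrac1NE_\hbar(F_N,R_N)^2$, which, applied to the (necessarily symmetric) solutions at time $t$, reduces the first inequality to $E_\hbar(F_N(t),R_{\hbar,N}(t))^2\le e^{\L t}E_\hbar(F^{in}_N,R^{in}_{\hbar,N})^2$.

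Assuming $E_\hbar(F^{in}_N,R^{in}_{\hbar,N})<\infty$ (otherwise there is nothing to prove), fix $\eps>0$ and an $\eps$-optimal $Q^{in}_N\in\cC(F^{in}_N,R^{in}_{\hbar,N})$, let $\Phi_t$ be the globally defined, measure-preserving Hamiltonian flow of $\bH_N$ (so $F_N(t)=(\Phi_t)_\#F^{in}_N$) and $U_N(t):=e^{-it\cH_{\hbar,N}/\hbar}$ (so $R_{\hbar,N}(t)=U_N(t)R^{in}_{\hbar,N}U_N(t)^*$). The transported object $Q_N(t,Z):=U_N(t)\,Q^{in}_N(\Phi_{-t}(Z))\,U_N(t)^*$ lies in $\cC(F_N(t),R_{\hbar,N}(t))$: the trace constraint follows from $F_N(t)=(\Phi_t)_\#F^{in}_N$ and volume preservation, the integral constraint from $\int Q^{in}_N\,dZ=R^{in}_{\hbar,N}$ after a change of variables. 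Hence, after that same change of variables,
$$E_\hbar(F_N(t),R_{\hbar,N}(t))^2\le\cE(t):=\int\Tr\bigl(c^{(N)}_\hbar(\Phi_t(Z))\,U_N(t)Q^{in}_N(Z)U_N(t)^*\bigr)\,dZ,$$
with $\cE(0)\le E_\hbar(F^{in}_N,R^{in}_{\hbar,N})^2+\eps$.

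The core is the differential inequality $\tfrac{d}{dt}\cE(t)\le\L\cE(t)$. Differentiating under the integral and trace yields two contributions: the derivative of $Z\mapsto c^{(N)}_\hbar(\Phi_t(Z))$ along the characteristics $\dot x_j=\xi_j$, $\dot\xi_j=-\tfrac1N\sum_k\grad V(x_j-x_k)$; and the von Neumann evolution of $U_N(t)Q^{in}_NU_N(t)^*$, which via $\Tr(A[\cH_{\hbar,N},\rho])=\Tr([A,\cH_{\hbar,N}]\rho)$ and the canonical commutation relations inserts $\tfrac1{i\hbar}[c^{(N)}_\hbar(\Phi_t(Z)),\cH_{\hbar,N}]$ into the trace. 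Only the commutators of $|x_j-y_j|^2$ with $-\hbar^2\Dlt_{y_j}$ and of $|\xi_j+i\hbar\grad_{y_j}|^2$ with $V(y_k-y_l)$ survive, and --- using that $\grad V$ is odd to symmetrize the interaction sums --- the two contributions collapse into
$$\tfrac{d}{dt}\cE(t)=\int\Tr\biggl(\sum_{j=1}^N\Bigl([(x_j-y_j)\cdot(\xi_j+i\hbar\grad_{y_j})]^{\mathrm{sym}}+\tfrac1N[(\xi_j+i\hbar\grad_{y_j})\cdot(G_j-\Gamma_j)]^{\mathrm{sym}}\Bigr)U_N(t)Q^{in}_NU_N(t)^*\biggr)\,dZ,$$
where $x_j=x_j(t),\xi_j=\xi_j(t)$ are the classical trajectories, $G_j:=\sum_k\grad V(y_j-y_k)$, $\Gamma_j:=\sum_k\grad V(x_j(t)-x_k(t))$, and $[\,\cdot\,]^{\mathrm{sym}}$ symmetrizes the scalar product of the two (possibly non-commuting) vector operators. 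The point behind uniformity in $N$ is that the force mismatch $\tfrac1N(G_j-\Gamma_j)$ is a multiplication operator with $|\tfrac1N(G_j-\Gamma_j)|\le L(|y_j-x_j|+\tfrac1N\sum_k|y_k-x_k|)$, whence $\sum_j|\tfrac1N(G_j-\Gamma_j)|^2\le4L^2\sum_j|y_j-x_j|^2$, with no $N$-dependent constant. Splitting $\cE=\cE_{\mathrm{pos}}+\cE_{\mathrm{mom}}$ and using Cauchy--Schwarz for the nonnegative form $(A,B)\mapsto\Tr(A^*B\,U_N(t)Q^{in}_NU_N(t)^*)$ together with Young's inequality, the first sum is $\le\cE_{\mathrm{pos}}+\cE_{\mathrm{mom}}$ and the second $\le\cE_{\mathrm{mom}}+4L^2\cE_{\mathrm{pos}}$, so $\tfrac{d}{dt}\cE(t)\le2\cE_{\mathrm{mom}}(t)+(1+4L^2)\cE_{\mathrm{pos}}(t)\le\L\cE(t)$ with $\L=1+\max(4L^2,1)$. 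Gronwall gives $\cE(t)\le e^{\L t}\cE(0)$; letting $\eps\to0$ and combining with the marginal monotonicity proves the first inequality, and the Monge--Kantorovich statements follow from it together with parts (1)--(2) of Theorem \ref{T-PtyE} in dimensions $Nd$ and $nd$ (for the last one, $\mu^{in}_N=F^{in}_N$ makes $\MKd(F^{in}_N,\mu^{in}_N)=0$).

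\emph{The hard part} is the analytic justification around $\tfrac{d}{dt}\cE(t)$: since $R^{in}_{\hbar,N}$ is only a density operator and $c^{(N)}_\hbar$ is unbounded, one must legitimately exchange $\tfrac{d}{dt}$ with the integral and trace, justify $\Tr(A[\cH_{\hbar,N},\rho])=\Tr([A,\cH_{\hbar,N}]\rho)$ for the unbounded $A=c^{(N)}_\hbar(\Phi_t(Z))$, and control the a priori finiteness and local boundedness of all traces involved. The standard route is to establish the identity and the differential inequality first for a dense class of regularized couplings (mollified, compactly supported, with spectral cutoffs of $\cH_{\hbar,N}$ and truncations of the cost), and then remove the regularization using lower semicontinuity of $E_\hbar$ and monotone/dominated convergence; the commutator computation itself is routine.
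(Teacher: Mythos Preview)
Your proposal is correct and follows essentially the same Eulerian/Dobrushin route as the paper: transport an (almost) optimal initial coupling by the combined Liouville--Schr\"odinger flow, compute the time derivative of the quadratic cost via the Poisson bracket and commutator identities, use the Lipschitz bound on $\grad V$ to control the force mismatch uniformly in $N$, and close by Gronwall and Theorem~\ref{T-PtyE}. The only organizational difference is bookkeeping: the paper exploits the $\fS_N$-symmetry (Lemma~\ref{L-DMargi}) to rewrite $\cD_{\hbar,N}$ as a one-particle integral and then differentiates via the first BBGKY-type equation for $Q^{\mathbf 1}_{\hbar,N}$ (so the interaction enters through the two-particle marginal $Q^{\mathbf 2}_{\hbar,N}$ and the term $\cW(X_2,Y_2)$), whereas you keep the full $N$-body sum and bound $\sum_j|\tfrac1N(G_j-\Gamma_j)|^2$ directly --- both computations give exactly the same differential inequality with the same constant $\L=1+\max(4L^2,1)$, and your upfront ``marginal monotonicity'' is precisely what the paper extracts at the end from Lemma~\ref{L-DMargi} together with the observation that $Q^{\mathbf n}_{\hbar,N}(t)\in\cC(F^{\mathbf n}_N(t),R^{\mathbf n}_{\hbar,N}(t))$.
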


\smallskip
Together with Theorem 2.4 in \cite{FGMouPaul}, Theorem \ref{T-SL} shows that both the upper horizontal and left vertical arrows in the diagram of section \ref{S-Intro} correspond to uniform limits. This uniformity explains why the mean-field,
large $N$ limit and the classical, small $\hbar$ limit can be taken simultaneously in the quantum $N$-body problem, in the case of interaction potentials with Lipschitz continuous gradient.

\subsection{Remarks}


Before starting with the proofs of Theorems \ref{T-HV}, \ref{T-NSV} and \ref{T-SL}, it is perhaps interesting to mention that the arguments used in these proofs can be adapted to the case where the Hamiltonian under consideration includes
an external (i.e. noninteracting) potential acting on each individual particle. For instance, our analysis in the proof of Theorem \ref{T-NSV} can be adapted to the case where the quantum $N$-body Hamiltonian is 
$$
\cH_{\hbar,N}:=\sum_{j=1}^N(-\tfrac12\hbar^2\Dlt_{x_j}+W(x_j))+\frac1{2N}\sum_{j,k=1}^NV(x_j-x_k)\,,
$$
assuming that $W$ is a real-valued function with bounded and Lipschitz continuous gradient such that $-\tfrac12\hbar^2\Dlt+W$ defines a self-adjoint operator on $L^2(\bR^d)$. The resulting Vlasov equation would be, in that case
$$
\d_tf+\{\tfrac12|\xi|^2+W(x)+V\star_x\rho_f(t,x),f\}=0
$$
instead of (\ref{Vlasov}). The convergence rate in Theorem \ref{T-NSV} would include the Lipschitz constant of the gradient of the external potential $W$.

\smallskip
For applications to physically realistic situations, it would be important to extend the results above to the case of singular interaction potentials. The Coulomb potential is of course the most relevant singular interaction for applications to atomic 
physics or quantum chemistry. One approach to this question would be to start with a mollified potential, and to remove the mollification as the particle number $N\to\infty$. This is the approach used for instance in \cite{HaurayJabin,PicklLaza}
--- see also the references therein. Mollifying the Coulomb potential can be understood in some sense as replacing point particles with spherical particles with some positive radius that vanishes in the large $N$ limit. The arguments in the 
present paper should be rather easily adapted to truncations of this type assuming that the particle radius vanishes slowly enough as $N\to\infty$ --- for instance, if the particle radius is of order $\gg C/\sqrt{\ln N}$ for some $C>0$. With this type 
of truncation one can hope to derive the Vlasov-Poisson equation following the line of Theorem \ref{T-NSV}. This truncation amounts to considering point particle systems so rarefied that the probability of observing such configuration in the 
spatial domain vanishes as $N\to\infty$. For the mean-field limit in classical mechanics, more satisfying results have been obtained recently, with much more realistic dependence of the particle radius in terms of $N$ (see 
\cite{HaurayJabin,PicklLaza,Laza}). Whether these results can be generalized to the quantum setting considered here remains an open question and would most likely require some additional new ideas.

\smallskip
We conclude this section with a few words on the method used in the proof of Theorems \ref{T-HV}, \ref{T-NSV} and \ref{T-SL}. The key idea is\footnote{In the words of one of the referees.} ``to double the system size by considering two kinds 
of particles, half of them being classical, the other half being quantum''. Then one writes a ``transport equation'' for the joint dynamics of these two kinds of particles, whose projections in the classical (resp. the quantum) part of the space is 
the governing equation for the classical (resp. the quantum) system of particles: see equations (\ref{PbCCoupl}) and (\ref{EqNCoupl}) below. The estimates on the pseudo-distance $E_\hbar$ between the quantum and the classical densities 
in Theorems \ref{T-HV}, \ref{T-NSV} and \ref{T-SL} are then obtained by considering appropriate ``moments'' of the solution of the transport equations (\ref{PbCCoupl}) and (\ref{EqNCoupl}) involving the cost function $c_\hbar$.


\section{Proof of Theorem \ref{T-PtyE}}


\subsection{Proof of the general lower bound}


Set $A_j=(x_j-y_j)$ and $B_j=\xi_j+i\hbar\d_{y_j}$; both $A_j$ and $B_j$ are self-adjoint unbounded operators on $\fH$, and one has
$$
A_j^2+B_j^2=(A_j+iB_j)^*(A_j+iB_j)+i[B_j,A_j]\ge i[B_j,A_j]=i(-i\hbar)=\hbar
$$
for each $j=1,\ldots,d$. Thus
$$
c_\hbar(x,\xi)=\tfrac12\sum_{j=1}^d(A_j^2+B_j^2)\ge\tfrac12d\hbar\,.
$$
In particular, for each $Q\in\cC(p,R)$, one has
$$
\int_{\bR^d\times\bR^d}\Tr(Q(x,\xi)c_\hbar(x,\xi))dxd\xi\ge\tfrac12d\hbar\int_{\bR^d\times\bR^d}\Tr(Q(x,\xi))dxd\xi=\tfrac12d\hbar\,.
$$
Minimizing the left hand side of this inequality over $\cC(p,R)$ leads to the announced inequality.

\subsection{Proof of (1)}


We shall use the following intermediate result.

\begin{Lem}\lb{L-CouplT}
Let $p$ be a probability density and $\mu$ a Borel probability measure on $\bR^d\times\bR^d$, and let $q$ be a coupling for $p$ and $\mu$. Then $q$ is a measurable map $(x,\xi)\mapsto q(x,\xi)$ defined a.e. on $\bR^d\times\bR^d$ with 
values in the set of positive Borel measures on $\bR^d\times\bR^d$, and the map 
$$
Q_\hbar:\,(x,\xi)\mapsto\Op^T_\hbar((2\pi\hbar)^dq(x,\xi))
$$
belongs to $\cC(p,\Op^T_\hbar((2\pi\hbar)^d\mu))$.
\end{Lem}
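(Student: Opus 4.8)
The plan is to realize the coupling $q$ concretely by disintegration and then push it through the Töplitz quantization $\Op^T_\hbar$, exploiting the linearity and positivity of that map. First I would recall that a coupling $q$ for $p$ and $\mu$ is a Borel probability measure on $(\bR^d\times\bR^d)\times(\bR^d\times\bR^d)$ whose first marginal is $p(x,\xi)\,dxd\xi$ and whose second marginal is $\mu$. Since $\bR^d\times\bR^d$ is a Polish space, the disintegration theorem provides a $p\,dxd\xi$-a.e. defined measurable family $(x,\xi)\mapsto\nu_{(x,\xi)}$ of Borel probability measures on $\bR^d\times\bR^d$ with $q(dxd\xi\,dzdw)=\nu_{(x,\xi)}(dzdw)\,p(x,\xi)\,dxd\xi$. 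Setting $q(x,\xi):=p(x,\xi)\nu_{(x,\xi)}$ yields exactly the measurable, positive-Borel-measure-valued map in the statement, with total mass $q(x,\xi)(\bR^d\times\bR^d)=p(x,\xi)$ for a.e. $(x,\xi)$, and $\int_{\bR^d\times\bR^d}q(x,\xi)\,dxd\xi=\mu$ (this last identity being precisely the second-marginal condition).

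Next I would check that $Q_\hbar(x,\xi):=\Op^T_\hbar((2\pi\hbar)^dq(x,\xi))$ belongs to $\cC(p,\Op^T_\hbar((2\pi\hbar)^d\mu))$, testing everything against vectors $\phi,\psi\in\fH$. From the definition of $\Op^T_\hbar$ and the cancellation of the $(2\pi\hbar)^{\pm d}$ factors, $Q_\hbar(x,\xi)=\int|z,\hbar\ra\la z,\hbar|\,q(x,\xi)(dz)$, so that $\la\phi,Q_\hbar(x,\xi)\psi\ra=\int\la\phi,|z,\hbar\ra\la z,\hbar|\psi\ra\,q(x,\xi)(dz)$; this is measurable in $(x,\xi)$ (the integrand is bounded and continuous in $z$ and $q$ is a measurable family) and bounded by $\|\phi\|_\fH\|\psi\|_\fH\,p(x,\xi)$, which is the appropriate (weak-operator) measurability required in the definition of $\cC$. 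Taking $\psi=\phi$ and using $|z,\hbar\ra\la z,\hbar|\ge 0$ shows $Q_\hbar(x,\xi)=Q_\hbar(x,\xi)^*\ge0$. For the trace, expanding in an orthonormal basis $(e_n)$ and applying Tonelli (all summands being nonnegative) gives
$$
\Tr(Q_\hbar(x,\xi))=\int\sum_n|\la z,\hbar|e_n\ra|^2\,q(x,\xi)(dz)=\int\||z,\hbar\ra\|_\fH^2\,q(x,\xi)(dz)=q(x,\xi)(\bR^d\times\bR^d)=p(x,\xi)\,.
$$

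Finally, for the marginal identity $\int_{\bR^d\times\bR^d}Q_\hbar(x,\xi)\,dxd\xi=\Op^T_\hbar((2\pi\hbar)^d\mu)$, I would again argue weakly: for $\phi,\psi\in\fH$ the double integral $\iint|\la\phi,|z,\hbar\ra\la z,\hbar|\psi\ra|\,q(x,\xi)(dz)\,dxd\xi\le\|\phi\|_\fH\|\psi\|_\fH$ is finite, so Fubini applies and
\be
\int_{\bR^d\times\bR^d}\la\phi,Q_\hbar(x,\xi)\psi\ra\,dxd\xi=\int_{\bR^d\times\bR^d}\la\phi,|z,\hbar\ra\la z,\hbar|\psi\ra\,\mu(dz)=\la\phi,\Op^T_\hbar((2\pi\hbar)^d\mu)\psi\ra\,,
\ee
using $\int q(x,\xi)\,dxd\xi=\mu$; since $\phi,\psi$ are arbitrary this gives the operator identity. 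The only ingredient that is not purely routine is the disintegration step, i.e. extracting the measurable family $(x,\xi)\mapsto q(x,\xi)$ from the abstract coupling; everything afterwards is the linearity of $\Op^T_\hbar$ together with Fubini/Tonelli, justified throughout by the normalization $\||z,\hbar\ra\|_\fH=1$. I therefore expect no real obstacle here, only some care with the measure-theoretic bookkeeping.
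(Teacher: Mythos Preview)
Your argument is correct and follows essentially the same route as the paper: disintegrate the coupling $q$ along its first marginal to obtain the measure-valued map $(x,\xi)\mapsto q(x,\xi)$, then verify the three defining properties of $\cC(p,\Op^T_\hbar((2\pi\hbar)^d\mu))$ using the positivity and linearity of $\Op^T_\hbar$ together with $\||z,\hbar\ra\|_\fH=1$. The only cosmetic difference is that the paper obtains the disintegration via absolute continuity and the Radon--Nikodym theorem (using that the first marginal $p\,dxd\xi$ is Lebesgue-absolutely continuous), whereas you invoke the disintegration theorem on a Polish space directly; both yield the same object, and your weak-operator bookkeeping for measurability and Fubini is, if anything, a bit more explicit than the paper's.
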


\smallskip
In other words, for each coupling $q$ of $p$ and $\mu$, the T\"oplitz operator 
$$
Q_\hbar(x,\xi)=\Op^T_\hbar((2\pi\hbar)^dq(x,\xi))\hbox{ belongs to }\cC(p,\Op^T_\hbar((2\pi\hbar)^d\mu))\,.
$$
Therefore
$$
E_\hbar(p,\Op^T_\hbar((2\pi\hbar)^d\mu))^2\le\int_{\bR^d\times\bR^d}\Tr(c_\hbar(x,\xi)\Op^T_\hbar((2\pi\hbar)^dq(x,\xi)))dxd\xi\,.
$$
For each $z\in\bR^d$, consider the quadratic polynomial 
$$
y\mapsto\g(z,y):=\tfrac12|z-y|^2\,.
$$
Then 
$$
c_\hbar(x,\xi)=\g(x,y)+\g(\xi,-i\hbar\grad_y)\,,
$$
and applying formula (\ref{TrOpTQuad}) shows that
$$
\ba
\Tr(c_\hbar(x,\xi)\Op^T_\hbar((2\pi\hbar)^dq(x,\xi)))\!=\!&\int_{\bR^d\times\bR^d}(\g(x,y)\!+\!\g(\xi,\eta))\mu(dyd\eta)\!+\!\tfrac12\hbar\Dlt\g(z,\cdot)
\\
=&\int_{\bR^d\times\bR^d}(\g(x,y)+\g(\xi,\eta))\mu(dyd\eta)+\tfrac12d\hbar\,.
\ea
$$
Hence
$$
E_\hbar(p,\Op^T_\hbar((2\pi\hbar)^d\mu))^2\le\int_{\bR^d\times\bR^d}(\g(x,y)+\g(\xi,\eta))\mu(dyd\eta)+\tfrac12d\hbar
$$
for each $q\in\Pi(p,\mu)$, and minimizing the right hand side of this inequality as $q$ runs through $\Pi(p,\mu)$ leads to the announced inequality.

\smallskip
It remains to prove Lemma \ref{L-CouplT}. This is a variant of Lemma 4.1 in \cite{FGMouPaul}.

\begin{proof}[Proof of Lemma \ref{L-CouplT}]
First, we observe that, for each $\phi\in C_b(\bR^d\times\bR^d)$
$$
\left|\iint_{\Om\times(\bR^d\times\bR^d)}\phi(y,\eta)q(dxd\xi dyd\eta)\right|\le\|\phi\|_{L^\infty}\iint_\Om p(x,\xi)dxd\xi=0
$$
if $\Om$ is Lebesgue-negligible in $\bR^d\times\bR^d$. Hence, for each $\phi\in C_b(\bR^d\times\bR^d)$, the bounded Borel measure on $\bR^d\times\bR^d$ defined by
$$
\Om\mapsto\iint_{\Om\times(\bR^d\times\bR^d)}\phi(y,\eta)q(dxd\xi dyd\eta)
$$
is absolutely continuous with respect to the Lebesgue measure on $\bR^d\times\bR^d$. By the Radon-Nikodym theorem, this Borel measure has a density with respect to the Lebesgue measure on $\bR^d\times\bR^d$ which is precisely
$$
(x,\xi)\mapsto\int_{\bR^d\times\bR^d}\phi(y,\eta)q(x,\xi,dyd\eta)
$$
where $q(x,\xi,\cdot)$ is the sought map. 

Since $q(x,\xi)$ is a probability measure for a.e. $(x,\xi)\in\bR^d\times\bR^d$, one has
$$
Q_\hbar(x,\xi)=\Op^T_\hbar((2\pi\hbar)^dq(x,\xi))=\Op^T_\hbar((2\pi\hbar)^dq(x,\xi))^*=Q_\hbar(x,\xi)^*\ge 0\,,
$$
and
$$
\Tr(Q_\hbar(x,\xi))=\Tr(\Op^T_\hbar((2\pi\hbar)^dq(x,\xi)))=\int_{\bR^d\times\bR^d}q(x,\xi,dyd\eta)=p(x,\xi)\,.
$$
On the other hand
$$
\ba
\int_{\bR^d\times\bR^d}Q_\hbar(x,\xi)dxd\xi=&\int_{\bR^d\times\bR^d}\Op^T_\hbar((2\pi\hbar)^dq(x,\xi))dxd\xi
\\
=&\Op^T_\hbar\left((2\pi\hbar)^d\int_{\bR^d\times\bR^d}q(x,\xi)dxd\xi\right)=\Op^T_\hbar((2\pi\hbar)^d\mu)\,.
\ea
$$
Hence $Q_\hbar\in\cC(p,\Op^T_\hbar((2\pi\hbar)^d\mu))$.
\end{proof}

\subsection{Proof of (2)}


Let $a,b\in C_b(\bR^d\times\bR^d)$ satisfy
\be\lb{Ineq-ab}
a(x,\xi)+b(y,\eta)\le\tfrac12(|x-y|^2+|\xi-\eta|^2)=:\Ga(x,\xi,y,\eta)
\ee
for each $x,y,\xi,\eta\in\bR^d$. Then
$$
a(x,\xi)\Op^T_\hbar(1)+\Op^T_\hbar(b)\le\Op^T_\hbar(\Ga(x,\xi,\cdot,\cdot))=c_\hbar(x,\xi)+\tfrac12d\hbar I_\fH\,,
$$
where the last equality follows from formulas (\ref{OpTQuad}). 

Then, for each $Q\in\cC(f,R)$
$$
\ba
\int_{\bR^d\times\bR^d}\Tr(c_\hbar(x,\xi)Q(x,\xi))dxd\xi&
\\
\ge\int_{\bR^d\times\bR^d}a(x,\xi)f(x,\xi)dxd\xi+\Tr(\Op^T_\hbar(b)R)-\tfrac12d\hbar&
\\
=\int_{\bR^d\times\bR^d}a(x,\xi)f(x,\xi)dxd\xi+\int_{\bR^d\times\bR^d}b(y,\eta)\widetilde W_\hbar[R](y,\eta)dyd\eta-\tfrac12d\hbar&\,,
\ea
$$
where the last equality follows from formula (\ref{TrOpTR}).

Minimizing the left-hand side of this inequality over $Q\in\cC(f,R)$, and maximizing over all $a,b\in C_b(\bR^d\times\bR^d)$ satisfying (\ref{Ineq-ab}), one has
$$
\ba
E_\hbar(f,R)^2+\tfrac12d\hbar&
\\
\ge\sup_{a\otimes 1+1\otimes b\le\Ga\atop a,b\in C_b(\bR^d\times\bR^d)}\left(\int a(x,\xi)f(x,\xi)dxd\xi+\int b(y,\eta)\widetilde W_\hbar[R](y,\eta)dyd\eta\right)&
\\
=\MKd(f,\widetilde W_\hbar[R])^2&\,,
\ea
$$
where the last equality follows from Kantorovich duality (Theorem 1 in chapter 1 of \cite{VillaniAMS}).

\subsection{Proof of (3)}


Since $W_\hbar[R_\hbar]\to\mu$ in $\cS'(\bR^d\times\bR^d)$, one has $\widetilde W_\hbar[R_\hbar]\to\mu$ weakly in the sense of probability measures on $\bR^d\times\bR^d$ as $\hbar\to 0$, by Theorem III.1 (1) in \cite{LionsPaul}.
Statement (2) implies that 
$$
\varliminf_{\hbar\to 0}E_\hbar(f,R_\hbar)\ge\varliminf_{\hbar\to 0}\MKd(f,\widetilde W_\hbar[R_\hbar])^2
$$
and Remark 6.12 in \cite{VillaniTOT} implies that
$$
\varliminf_{\hbar\to 0}\MKd(f,\widetilde W_\hbar[R_\hbar])^2\ge\MKd(f,\mu)^2\,.
$$


\section{Proof of Theorem \ref{T-HV}}


The proof is based on the same idea of an Eulerian variant of the Dobrushin estimate \cite{Dobru} for the mean-field limit as in \cite{FGMouPaul}.

\subsection{Step 1: Growth of the Moments of $f$.}


\begin{Lem}
Let $f^{in}$ be a probability density on $\bR^d\times\bR^d$ such that
$$
\iint_{\bR^d\times\bR^d}(|x|^2+|\xi|^2)f^{in}(x,\xi)dxd\xi<\infty\,,
$$
and let $f$ be the solution of the Cauchy problem (\ref{Vlasov}). Then, for each $t\ge 0$, one has
$$
\ba
\iint_{\bR^d\times\bR^d}\tfrac12(|x|^2+|\xi|^2)f(t,x,\xi)dxd\xi&
\\
\le e^t\left(\iint_{\bR^d\times\bR^d}\tfrac12(|x|^2+|\xi|^2)f^{in}(x,\xi)dxd\xi+\|V\|_{L^\infty}\right)&\,.
\ea
$$
\end{Lem}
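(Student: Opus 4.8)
The plan is to write $M(t):=\iint_{\bR^d\times\bR^d}\tfrac12(|x|^2+|\xi|^2)f(t,x,\xi)\,dxd\xi$ as $M(t)=P(t)+K(t)$, with $P(t):=\iint\tfrac12|x|^2f(t)$ and $K(t):=\iint\tfrac12|\xi|^2f(t)$, and to control the two pieces separately: I will bound the kinetic part $K(t)$ \emph{uniformly} in $t$ using conservation of the total energy of (\ref{Vlasov}), and then obtain the exponential factor from a Gronwall inequality for $P(t)$ that is closed by the uniform bound on $K$. Throughout I use that $\rho_f(t,\cdot)$ is a probability density for every $t\ge 0$ (integrate (\ref{Vlasov}) in $\xi$), so that $\|V\star_x\rho_f\|_{L^\infty}\le\|V\|_{L^\infty}$ and $\|\grad V\star_x\rho_f\|_{L^\infty}\le\|\grad V\|_{L^\infty}$ at each time.

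For the kinetic part, I would introduce the interaction energy $U(t):=\tfrac12\iint V(x-y)\rho_f(t,x)\rho_f(t,y)\,dxdy$, which satisfies $|U(t)|\le\tfrac12\|V\|_{L^\infty}$ for all $t$ because $\rho_f$ is a probability density and $V\in L^\infty$. Using (\ref{Vlasov}) in the form $\d_tf+\xi\cdot\grad_xf-(\grad V\star_x\rho_f)\cdot\grad_\xi f=0$ together with the continuity equation $\d_t\rho_f+\Div_x j_f=0$, $j_f(t,x):=\int\xi f(t,x,\xi)\,d\xi$, a couple of integrations by parts (and the evenness of $V$) give
\[
\dot K(t)=-\iint(\grad V\star_x\rho_f)(t,x)\cdot\xi\,f(t,x,\xi)\,dxd\xi=-\dot U(t)\,.
\]
Hence $K(t)+U(t)$ is constant in time, and combining this with $|U(t)|,|U(0)|\le\tfrac12\|V\|_{L^\infty}$ yields $K(t)\le K(0)+\|V\|_{L^\infty}$ for all $t\ge 0$.

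For the position part, testing (\ref{Vlasov}) against $\tfrac12|x|^2$ (the $\grad_\xi$ contribution drops, since $\tfrac12|x|^2$ does not depend on $\xi$) and integrating by parts in $x$ gives $\dot P(t)=\iint x\cdot\xi\,f(t,x,\xi)\,dxd\xi\le P(t)+K(t)\le P(t)+\big(K(0)+\|V\|_{L^\infty}\big)$, and Gronwall then yields $P(t)\le\big(P(0)+K(0)+\|V\|_{L^\infty}\big)e^t-\big(K(0)+\|V\|_{L^\infty}\big)$. Adding the bound $K(t)\le K(0)+\|V\|_{L^\infty}$ I get $M(t)=P(t)+K(t)\le\big(P(0)+K(0)+\|V\|_{L^\infty}\big)e^t=\big(M(0)+\|V\|_{L^\infty}\big)e^t$, which is the assertion.

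The delicate point — and the main obstacle — is that $f$ is only a weak solution, so a priori the time derivatives of $P,K,U$ and the integrations by parts above need justification, and the second moments could in principle be infinite. I would handle this by recalling (from the discussion following Theorem \ref{T-HV}) that $f(t)$ is the push-forward of $f^{in}$ by the characteristic flow $(X_t,\Xi_t)$ of (\ref{Vlasov}), for which $\dot X_t=\Xi_t$ and $|\dot\Xi_t|=|(\grad V\star_x\rho_f)(t,X_t)|\le\|\grad V\|_{L^\infty}$; therefore $|\Xi_t|\le|\Xi_0|+t\|\grad V\|_{L^\infty}$ and $|X_t|\le|X_0|+t|\Xi_0|+\tfrac12t^2\|\grad V\|_{L^\infty}$, so the second moment of $f(t)$ is finite for all $t$ and differentiation under the integral sign is legitimate by dominated convergence. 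Alternatively one runs the whole computation with the truncated weights $\tfrac12|x|^2\chi(x/R)$ and $\tfrac12|\xi|^2\chi(\xi/R)$, where $\chi\in C_c^\infty$ equals $1$ near the origin, and lets $R\to\infty$, the correction terms being controlled by the boundedness of the force field $\grad V\star_x\rho_f$. Once the computations are justified, the two steps above go through unchanged.
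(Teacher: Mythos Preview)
Your proof is correct and follows essentially the same approach as the paper: both use conservation of the total (kinetic $+$ interaction) energy to obtain the uniform bound $K(t)\le K(0)+\|V\|_{L^\infty}$, then compute $\dot P(t)=\iint x\cdot\xi\,f$ and close with Gronwall. The only cosmetic difference is that the paper applies Gronwall directly to the full moment $m_2=P+K$ via the integral inequality $m_2(t)\le m_2(0)+\|V\|_{L^\infty}+\int_0^t m_2(s)\,ds$, whereas you apply Gronwall to $P$ alone and add the kinetic bound afterward; both routes yield the same estimate. Your additional paragraph justifying the computations for weak solutions via the characteristic flow is a welcome detail that the paper leaves implicit.
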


\begin{proof}
First we recall the conservation of energy for solutions of the Vlasov equation:
$$
\frac{d}{dt}\left(\iint_{\bR^d\times\bR^d}\tfrac12|\xi|^2f(t,x,\xi)dxd\xi+\tfrac12\iint_{\bR^d\times\bR^d}V(x-y)\rho_f(t,x)\rho_f(t,y)dxdy\right)=0,
$$
so that
$$
\iint_{\bR^d\times\bR^d}\tfrac12|\xi|^2f(t,x,\xi)dxd\xi\le\iint_{\bR^d\times\bR^d}\tfrac12|\xi|^2f^{in}(x,\xi)dxd\xi+\|V\|_{L^\infty}
$$
for each $t\ge 0$. 

On the other hand
$$
\frac{d}{dt}\iint_{\bR^d\times\bR^d}\tfrac12|x|^2f(t,x,\xi)dxd\xi=\iint_{\bR^d\times\bR^d}\xi\cdot xf(t,x,\xi)dxd\xi
$$
so that
$$
m_2(t):=\iint_{\bR^d\times\bR^d}\tfrac12(|x|^2+|\xi|^2)f(t,x,\xi)dxd\xi
$$
satisfies
$$
m_2(t)\le m_2(0)+\|V\|_{L^\infty}+\int_0^tm_2(s)ds\,.
$$
The conclusion follows from Gronwall's lemma.
\end{proof}

\subsection{Step 2: a Dynamics for Couplings of $f$ and $R_\hbar$.}


For all $Q^{in}_\hbar\in\cC(f^{in},R^{in}_\hbar)$, let $Q_\hbar\equiv Q(t,x,\xi)$ be the solution of the Cauchy problem
\be\lb{PbCCoupl}
\left\{
\ba
{}&\d_tQ_\hbar\!+\!(\xi\cdot\grad_x\!-\!\grad V\star_x\rho_f(t,x)\cdot\grad_\xi)Q_\hbar\!+\!\left[-\tfrac12i\hbar\Dlt_y\!+\!\frac{i}{\hbar}V\star\rho[R_\hbar](t,y),Q_\hbar\right]\!=\!0\,,
\\
&Q_\hbar\rstr_{t=0}=Q_\hbar^{in}\,.
\ea
\right.
\ee

Let us briefly recall how the solution of (\ref{PbCCoupl}) is obtained. Denoting $z=(x,\xi)$, let $t\mapsto Z(t,s,z)\in\bR^d\times\bR^d$ be the integral curve of the time-dependent vector field $(\xi,-\grad V\star\rho_f(t,x))$ passing through $(x,\xi)$
at time $t=s$. Let $t\mapsto M(t,s)$ be the time-dependent unitary operator on $\fH$ such that
$$
i\hbar\d_tM_\hbar+\tfrac12\hbar^2\Dlt_yM_\hbar-V\star\rho[R_\hbar](t,y)M_\hbar=0\,,\qquad M_\hbar\rstr_{t=0}=I_\fH\,.
$$
Then
$$
Q_\hbar(t,z)=M_\hbar(t)Q^{in}_\hbar(Z(0,t,z))M_\hbar(t)^*
$$
(see \cite{BoveDPF}, especially Proposition 4.3).

\begin{Lem}
Let $Q^{in}_\hbar\in\cC(f^{in},R^{in}_\hbar)$. Then $Q_\hbar(t)\in\cC(f(t),R_\hbar(t))$ for each $t\ge 0$. 
\end{Lem}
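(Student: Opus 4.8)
The plan is to verify the two defining properties of a coupling in $\cC(f(t),R_\hbar(t))$: that $\Tr(Q_\hbar(t,x,\xi))=f(t,x,\xi)$ for a.e. $(x,\xi)$, and that $\int Q_\hbar(t,x,\xi)\,dxd\xi=R_\hbar(t)$; positivity and self-adjointness of $Q_\hbar(t,x,\xi)$ are immediate from the representation $Q_\hbar(t,z)=M_\hbar(t)Q^{in}_\hbar(Z(0,t,z))M_\hbar(t)^*$ since $M_\hbar(t)$ is unitary and $Q^{in}_\hbar(Z(0,t,z))\ge 0$. I would argue both marginal identities by taking traces (resp. integrating) in the transport equation (\ref{PbCCoupl}) and checking that the resulting quantities solve the Vlasov equation (\ref{Vlasov}) (resp. the Hartree equation (\ref{Hartree})) with the correct initial data, then invoking uniqueness for those Cauchy problems.

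First I would establish the spatial-marginal identity. Set $S(t):=\int_{\bR^d\times\bR^d}Q_\hbar(t,x,\xi)\,dxd\xi\in\cL(\fH)$. Using the explicit solution formula, $S(t)=M_\hbar(t)\big(\int Q^{in}_\hbar(Z(0,t,z))\,dz\big)M_\hbar(t)^*$; since $z\mapsto Z(0,t,z)$ is the (measure-preserving, being the flow of a divergence-free vector field, as $\Div_z(\xi,-\grad V\star\rho_f)=0$) characteristic flow, the change of variables gives $\int Q^{in}_\hbar(Z(0,t,z))\,dz=\int Q^{in}_\hbar(z')\,dz'=R^{in}_\hbar$. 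Hence $S(t)=M_\hbar(t)R^{in}_\hbar M_\hbar(t)^*$. It then remains to observe that $t\mapsto M_\hbar(t)R^{in}_\hbar M_\hbar(t)^*$ solves $i\hbar\d_t S=[-\tfrac12\hbar^2\Dlt_y+V\star\rho[R_\hbar](t,y),S]$ with $S(0)=R^{in}_\hbar$, which is exactly the Hartree Cauchy problem (\ref{Hartree}) whose unique solution is $R_\hbar(t)$; therefore $S(t)=R_\hbar(t)$. Alternatively, and perhaps more cleanly, one integrates (\ref{PbCCoupl}) in $(x,\xi)$: the transport term $\int(\xi\cdot\grad_x-\grad V\star\rho_f\cdot\grad_\xi)Q_\hbar\,dxd\xi$ vanishes as the integral of a divergence, and the commutator term integrates to $[-\tfrac12 i\hbar\Dlt_y+\tfrac{i}{\hbar}V\star\rho[R_\hbar],S(t)]$, yielding the same ODE.

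Next I would establish the phase-space density identity. Set $\phi(t,x,\xi):=\Tr(Q_\hbar(t,x,\xi))$. From the solution formula and the cyclicity of the trace (together with $M_\hbar(t)^*M_\hbar(t)=I_\fH$), one gets $\phi(t,z)=\Tr(Q^{in}_\hbar(Z(0,t,z)))=f^{in}(Z(0,t,z))$, so $\phi(t)$ is the push-forward of $f^{in}$ along the characteristic flow of $(\xi,-\grad V\star\rho_f(t,x))$. Equivalently, taking the trace in (\ref{PbCCoupl}) and using that the trace kills the commutator, one finds $\d_t\phi+\xi\cdot\grad_x\phi-\grad V\star_x\rho_f(t,x)\cdot\grad_\xi\phi=0$ with $\phi(0)=f^{in}$. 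One subtlety to record here is that the potential appearing in this equation involves $\rho_f$ — the density of the \emph{Vlasov} solution $f$ — not $\rho_\phi$; so this is not a priori the (nonlinear) Vlasov equation but a \emph{linear} transport equation with a given coefficient. Its unique solution with data $f^{in}$ is the push-forward of $f^{in}$ along that same flow, which is precisely how $f(t)$ is defined (as recalled just after the statement of Theorem \ref{T-HV}). Hence $\phi(t)=f(t)$, as desired.

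The main obstacle is essentially bookkeeping about regularity: since $f^{in}$ is only an $L^1$ probability density and $R^{in}_\hbar$ need not have $[\Dlt,R^{in}_\hbar]$ trace-class, the manipulations above (integrating the transport equation against test operators, taking traces inside the evolution, justifying the change of variables and Fubini) must be carried out in a weak/duality formulation rather than pointwise. I would phrase the whole argument in the mild (Duhamel/characteristics) form via the representation $Q_\hbar(t,z)=M_\hbar(t)Q^{in}_\hbar(Z(0,t,z))M_\hbar(t)^*$, which sidesteps differentiability issues: the two identities then reduce to the measure-preserving property of the Hamiltonian flow $Z$ and to the unitarity of $M_\hbar$, both of which hold under the stated hypotheses (Cauchy–Lipschitz for $Z$, and \cite{BoveDPF} for $M_\hbar$). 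Testing against $A\in\cL(\fH)$ and $\varphi\in C_b(\bR^d\times\bR^d)$ and using dominated convergence (the total mass being $1$ uniformly) makes all exchanges of integral and trace legitimate.
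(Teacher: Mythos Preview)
Your proposal is correct and follows essentially the same approach as the paper: both arguments use the explicit representation $Q_\hbar(t,z)=M_\hbar(t)Q^{in}_\hbar(Z(0,t,z))M_\hbar(t)^*$, reducing the trace identity to unitarity of $M_\hbar$ plus the characteristic formula $f(t,z)=f^{in}(Z(0,t,z))$, and the integral identity to the measure-preserving property of the Hamiltonian flow $Z$ plus the definition $R_\hbar(t)=M_\hbar(t)R^{in}_\hbar M_\hbar(t)^*$. Your additional remarks on the linearity of the transport equation for $\phi$ and the regularity bookkeeping are helpful elaborations but not needed beyond what the paper records.
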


\begin{proof}
Since $M(t)$ is unitary
$$
\ba
\Tr(Q_\hbar(t,z))&=\Tr(M_\hbar(t)Q^{in}_\hbar(Z(0,t,z))M_\hbar(t)^*)
\\
&=\Tr(M_\hbar(t)^*M_\hbar(t)Q^{in}_\hbar(Z(0,t,z)))
\\
&=\Tr(Q^{in}_\hbar(Z(0,t,z)))=f^{in}(Z(0,t,z))=f(t,z)\,,
\ea
$$
while
$$
\ba
\int_{\bR^d\times\bR^d}Q_\hbar(t,z)dxd\xi&=M_\hbar(t)\left(\int_{\bR^d\times\bR^d}Q^{in}_\hbar(Z(0,t,z))dxd\xi\right)M_\hbar(t)^*
\\
&=M_\hbar(t)R_\hbar^{in}M_\hbar(t)^*=R_\hbar(t)\,,
\ea
$$
since the measure $dxd\xi$ is invariant under the Hamiltonian flow $Z(s,t,\cdot)$ for all $s,t\in\bR$.
\end{proof}

\subsection{Step 3: the Eulerian Estimate for the Second Order Moment of $Q_\hbar$.}


We define
$$
\cE_\hbar(t):=\int_{\bR^d\times\bR^d}\Tr(c_\hbar(z)Q_\hbar(t,z))dxd\xi\,.
$$
Since $Q_\hbar$ is a solution of (\ref{PbCCoupl}), one has
$$
\ba
\frac{d\cE_\hbar}{dt}(t)=&\int_{\bR^d\times\bR^d}\Tr(Q_\hbar(t,z)\{\tfrac12|\xi|^2+V\star\rho_f(t,x),c_\hbar(z)\})dxd\xi
\\
&+\int_{\bR^d\times\bR^d}\Tr\left(Q_\hbar(t,z)\left[-\tfrac12i\hbar\Dlt_y+\frac{i}{\hbar}V\star\rho[R_\hbar](t,y),c_\hbar(z)\right]\right)dxd\xi\,.
\ea
$$

We shall need the following auxiliary computations. First
$$
\{\tfrac12|\xi|^2,c_\hbar(z)\}=\xi\cdot(x-y)\,,
$$
while
$$
\ba
{}[-\tfrac12i\hbar\Dlt_y,c_\hbar]=&-\tfrac12i\hbar[\grad_y,c_\hbar]\cdot\grad_y-\tfrac12i\hbar\grad_y\cdot[\grad_y,c_\hbar]
\\
=&-\tfrac12i\hbar(y-x)\cdot\grad_y-\tfrac12i\hbar\grad_y\cdot(y-x)\,,
\ea
$$
so that
$$
\{\tfrac12|\xi|^2,c_\hbar(z)\}-\left[\tfrac12i\hbar\Dlt_y,c_\hbar(z)\right]=\tfrac12(x-y)\cdot(\xi+i\hbar\grad_y)+\tfrac12(\xi+i\hbar\grad_y)\cdot(x-y)\,.
$$

Likewise
$$
\{V\star\rho_f(t,x),c_\hbar(z)\}=-\grad V\star\rho_f(t,x)\cdot(\xi+i\hbar\grad_y)\,,
$$
while
$$
\ba
\left[c_\hbar(z),\frac{i}{\hbar}V\star\rho[R_\hbar](t,y)\right]=&-\tfrac12\grad V\star\rho[R_\hbar](t,y)\cdot(\xi+i\hbar\grad_y)
\\
&-\tfrac12(\xi+i\hbar\grad_y)\cdot\grad V\star\rho[R_\hbar](t,y)\,,
\ea
$$
so that
$$
\ba
\{V\star\rho_f(t,x),c_\hbar(z)\}+\frac{i}{\hbar}\left[V\star\rho[R_\hbar](t,y),c_\hbar(z),\right]&
\\
=\tfrac12(\grad V\star\rho[R_\hbar](t,y)-\grad V\star\rho_f(t,x))\cdot(\xi+i\hbar\grad_y)&
\\
+\tfrac12(\xi+i\hbar\grad_y)\cdot(\grad V\star\rho[R_\hbar](t,y)-\grad V\star\rho_f(t,x))&\,.
\ea
$$

Hence
$$
\ba
\frac{d\cE_\hbar}{dt}(t)\!=\!\int_{\bR^d\times\bR^d}\Tr(Q_\hbar(t,z)\tfrac12((x-y)\!\cdot\!(\xi\!+\!i\hbar\grad_y)\!+\!(\xi\!+\!i\hbar\grad_y)\!\cdot\!(x-y)))dxd\xi&
\\
+\int_{\bR^d\times\bR^d}\Tr(Q_\hbar(t,z)\tfrac12(\grad V\star\rho[R_\hbar](t,y)-\grad V\star\rho_f(t,x))\cdot(\xi+i\hbar\grad_y))dxd\xi&
\\
+\int_{\bR^d\times\bR^d}\Tr(Q_\hbar(t,z)\tfrac12(\xi+i\hbar\grad_y)\cdot(\grad V\star\rho[R_\hbar](t,y)-\grad V\star\rho_f(t,x)))dxd\xi&\,.
\ea
$$

At this point, we recall that, if $A,B$ are self-adjoint, possibly unbounded operators on $\fH$, and if $R\in\cL(\fH)$ satisfies $R=R^*\ge 0$, then
\be\lb{NCCS}
\Tr(R(AB+BA))\le\Tr(R(A^2+B^2))\,,
\ee
since
$$
\Tr(R(A-B)^2)\ge 0\,.
$$
(If $T$ is an unbounded, self-adjoint nonnegative operator on $\fH$, we define $\Tr(R T)$ as an element of $[0,+\infty]$ by the formula $\Tr(R T):=\Tr(R^{1/2}TR^{1/2})$, even if $RT$ is not a trace-class operator on $\fH$.)

Hence
$$
\int_{\bR^d\times\bR^d}\Tr(Q_\hbar(t,z)\tfrac12((x-y)\cdot(\xi+i\hbar\grad_y)+(\xi+i\hbar\grad_y)\cdot(x-y)))dxd\xi\le\cE_\hbar(t)\,,
$$
and
$$
\ba
\int_{\bR^d\times\bR^d}\Tr(Q_\hbar(t,z)\tfrac12(\grad V\star\rho[R_\hbar](t,y)-\grad V\star\rho_f(t,x))\cdot(\xi+i\hbar\grad_y))dxd\xi&
\\
+\int_{\bR^d\times\bR^d}\Tr(Q_\hbar(t,z)\tfrac12(\xi+i\hbar\grad_y)\cdot(\grad V\star\rho[R_\hbar](t,y)-\grad V\star\rho_f(t,x)))dxd\xi&
\\
\le\int_{\bR^d\times\bR^d}\Tr(Q_\hbar(t,z)\tfrac12|\grad V\star\rho_f(t,x)-\grad V\star\rho[R_\hbar](t,y)|^2)dxd\xi&
\\
+\int_{\bR^d\times\bR^d}\Tr(Q_\hbar(t,z)\tfrac12|\xi+i\hbar\grad_y|^2)dxd\xi&\,.
\ea
$$
Next
$$
\ba
\int_{\bR^d\times\bR^d}\Tr(Q_\hbar(t,z)\tfrac12|\grad V\star\rho_f(t,x)-\grad V\star\rho[R_\hbar](t,y)|^2)dxd\xi&
\\
\le\int_{\bR^d\times\bR^d}\Tr(Q_\hbar(t,z)|\grad V\star\rho_f(t,x)-\grad V\star\rho[R_\hbar](t,x)|^2)dxd\xi&
\\
+\int_{\bR^d\times\bR^d}\Tr(Q_\hbar(t,z)|\grad V\star\rho[R_\hbar](t,x)-\grad V\star\rho[R_\hbar](t,y)|^2)dxd\xi&\,.
\ea
$$
Observe that
$$
\ba
|\grad V\star\rho[R_\hbar](t,x)-\grad V\star\rho[R_\hbar](t,y)|
\\
=\left|\int_{\bR^d}(\grad V(x-z)-\grad V(y-z))\rho[R_\hbar](t,z)dz\right|
\\
=\int_{\bR^d}|\grad V(x-z)-\grad V(y-z)|\rho[R_\hbar](t,z)dz
\\
\le\Lip(\grad V)|x-y|\int_{\bR^d}\rho[R_\hbar](t,z)dz
\\
\le\Lip(\grad V)|x-y|\,.
\ea
$$
Hence
$$
\ba
\int_{\bR^d\times\bR^d}\Tr(Q_\hbar(t,z)|\grad V\star\rho[R_\hbar](t,x)-\grad V\star\rho[R_\hbar](t,y)|^2)dxd\xi
\\
\le\Lip(\grad V)^2\int_{\bR^d\times\bR^d}\Tr(Q_\hbar(t,z)|x-y|^2)dxd\xi\,.
\ea
$$

Summarizing, we have proved that
$$
\ba
\frac{d\cE_\hbar}{dt}(t)\le\cE_\hbar(t)+\int_{\bR^d\times\bR^d}|\grad V\star\rho_f(t,x)-\grad V\star\rho[R_\hbar](t,x)|^2f(t,x,\xi)dxd\xi
\\
+\int_{\bR^d\times\bR^d}\Tr(Q_\hbar(t,z)(\tfrac12|\xi+i\hbar\grad_y|^2+\Lip(\grad V)^2|x-y|^2))dxd\xi&\,.
\ea
$$

Observe that
$$
\ba
\grad V\star\rho_f(t,X)-\grad V\star\rho[R_\hbar](t,X)
\\
=\int_{\bR^d\times\bR^d}\grad V(X-x)f(t,x,\xi)dxd\xi-\int_{\bR^d}\grad V(X-y)\rho[R_\hbar](t,y)dy
\\
=\int_{\bR^d\times\bR^d}\grad V(X-x)f(t,x,\xi)dxd\xi-\Tr(\grad V(X-\cdot)R_\hbar)
\\
=\int_{\bR^d\times\bR^d}\Tr((\grad V(X-x)-\grad V(X-y))Q_\hbar(t,x,\xi))dxd\xi
\ea
$$
so that, by the Cauchy-Schwarz inequality, 
$$
\ba
|\grad V\star\rho_f(t,X)-\grad V\star\rho[R_\hbar](t,X)|^2&
\\
\le\int_{\bR^d\times\bR^d}\Tr(|\grad V(X-x)-\grad V(X-y)|^2Q_\hbar(t,x,\xi))dxd\xi&
\\
\le\Lip(\grad V)^2\int_{\bR^d\times\bR^d}\Tr(|x-y|^2Q_\hbar(t,x,\xi))dxd\xi&\,.
\ea
$$

Hence
$$
\ba
\frac{d\cE_\hbar}{dt}(t)\le\cE_\hbar(t)+\Lip(\grad V)^2\int_{\bR^d\times\bR^d}\Tr(|x-y|^2Q_\hbar(t,x,\xi))dxd\xi
\\
+\int_{\bR^d\times\bR^d}\Tr(Q_\hbar(t,z)(\tfrac12|\xi+i\hbar\grad_y|^2+\Lip(\grad V)^2|x-y|^2))dxd\xi&\,,
\ea
$$
or, in other words,
$$
\frac{d\cE_\hbar}{dt}(t)\le(1+\max(4\Lip(\grad V)^2,1))\cE_\hbar(t)\,.
$$
Therefore
$$
E_\hbar(f(t),R_\hbar(t))^2=\cE_\hbar(t)\le e^{\Lambda t}\cE_\hbar(0)\,,\quad\hbox{ for all }t\ge 0\,.
$$

Minimizing the left hand side of the inequality above as $Q_\hbar^{in}$ runs through $\cC(f^{in},R_\hbar^{in})$, we conclude that
$$
E_\hbar(f(t),R_\hbar(t))^2\le e^{\Lambda t}E_\hbar(f^{in},R_\hbar^{in})^2\,,\quad\hbox{ for all }t\ge 0\,.
$$

\subsection{Step 4: Convergence Rate and Monge-Kantorovich Distances.}


First, we apply statement (2) in Theorem \ref{T-PtyE} to the left-hand side of the inequality above. One finds that
$$
\ba
\MKd(f(t),\widetilde W_\hbar[R_\hbar(t)])^2\le&E_\hbar(f(t),R_\hbar(t))^2+\tfrac12d\hbar
\\
\le&e^{\Lambda t}E_\hbar(f^{in},R_\hbar^{in})^2+\tfrac12d\hbar\,,\quad\hbox{ for all }t\ge 0\,.
\ea
$$
In addition, if $R_\hbar^{in}=\Op^T_\hbar((2\pi\hbar)^d\mu^{in})$ where $\mu^{in}$ is a Borel probability measure on $\bR^d$, we apply statement (3) in Theorem \ref{T-PtyE} to the right-hand side of the previous inequality, to find that
$$
\MKd(f(t),\widetilde W_\hbar[R_\hbar(t)])^2\le e^{\Lambda t}\left(\MKd(f^{in},\mu^{in})^2+\tfrac12d\hbar\right)+\tfrac12d\hbar
$$
for all $t\ge 0$.


\section{Proof of Theorem \ref{T-NSV}}


\subsection{Step 1: a Dynamics for Couplings of $f^{\otimes N}$ and $R_{\hbar,N}$.}


Consider an arbitrary coupling $Q^{in}_{\hbar,N}\in\cC((f^{in})^{\otimes N},R^{in}_{\hbar,N})$, satisfying the symmetry condition
\be\lb{NSym}
Q^{in}_{\hbar,N}(\si\cdot X_N,\si\cdot\Xi_N)=U^*_\si Q^{in}_{\hbar,N}(X_N,\Xi_N)U_\si\,.
\ee
The set of all such couplings is denoted $\cC^s((f^{in})^{\otimes N},R^{in}_{\hbar,N})$.

Let $Q_{\hbar,N}\equiv Q_{\hbar,N}(t,X_N,\Xi_N)$ be the solution of the Cauchy problem
\be\lb{EqNCoupl}
\ba
\d_tQ_{\hbar,N}(t,X_N,\Xi_N)&+\sum_{j=1}^N(\xi_j\cdot\grad_{x_j}-\grad V\star_x\rho_f(t,x_j)\cdot\grad_{\xi_j})Q_{\hbar,N}(t,X_N,\Xi_N)
\\
&+\frac{i}{\hbar}[\cH_{\hbar,N},Q_{\hbar,N}(t,X_N,\Xi_N)]_N=0\,,
\ea
\ee
with initial data 
\be\lb{InNcoupl}
Q_{\hbar,N}(0,X_N,\Xi_N)=Q^{in}_{\hbar,N}(X_N,\Xi_N)\,.
\ee
Here, we denote by $[\cdot,\cdot]_N$ the commutator between operators on $\fH_N$, and we recall that the quantum Hamiltonian $\cH_{\hbar,N}$ is defined in formula (\ref{QHamN}).

\begin{Lem}\lb{L-NQCoupl}
Let $Q_{\hbar,N}$ be a solution of (\ref{EqNCoupl}) with initial data (\ref{InNcoupl}) satisfying the symmetry (\ref{NSym}). Then

\smallskip
\noindent
(a) one has 
$$
Q_{\hbar,N}(t)\in\cC(f(t)^{\otimes N},R_{\hbar,N}(t))\quad\hbox{ for each }t\ge 0\,;
$$
(b) for each $\si\in\fS_N$ and each $t\ge 0$, one has
$$
Q_{\hbar,N}(t,\si\cdot X_N,\si\cdot\Xi_N)=U^*_\si Q_{\hbar,N}(t,X_N,\Xi_N)U_\si\,.
$$
In other words, $Q_{\hbar,N}(t)\in\cC^s(f(t)^{\otimes N},R_{\hbar,N}(t))$ for all $t\ge 0$.
\end{Lem}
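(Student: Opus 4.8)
The plan is to prove this exactly as the single-particle coupling lemma in Step~2 of the proof of Theorem~\ref{T-HV}: I would write the unique solution of the Cauchy problem (\ref{EqNCoupl})--(\ref{InNcoupl}) in closed form by combining the method of characteristics for the classical transport part with conjugation by the $N$-body Schr\"odinger propagator for the quantum part, and then read off both (a) and (b) from this formula together with the relevant invariance properties of the two flows involved.

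Concretely, writing $Z_N=(X_N,\Xi_N)$, let $t\mapsto\cZ_N(t,s,Z_N)$ be the flow on $(\bR^d\times\bR^d)^N$ of the time-dependent vector field $(\xi_j,-\grad V\star_x\rho_f(t,x_j))_{j=1}^N$; it is globally defined by Cauchy--Lipschitz since $\grad V$ is bounded and Lipschitz and $\rho_f(t,\cdot)$ is a probability density, it factorizes as the $N$-fold product of the single-particle Vlasov flow $Z(\cdot,\cdot,\cdot)$ of Step~2 (all particles obey the same ODE driven by the same mean field), it preserves Lebesgue measure on $(\bR^d\times\bR^d)^N$ because the vector field is divergence-free, and it is $\fS_N$-equivariant, $\cZ_N(t,s,\si\cdot Z_N)=\si\cdot\cZ_N(t,s,Z_N)$. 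Let $\cM_{\hbar,N}(t):=e^{-it\cH_{\hbar,N}/\hbar}$ be the $N$-body Schr\"odinger propagator, a unitary operator on $\fH_N$ which commutes with every $U_\si$ (as already noted, $U_\si\cH_{\hbar,N}=\cH_{\hbar,N}U_\si$). Then, as in \cite{BoveDPF} (Proposition 4.3), the unique solution of (\ref{EqNCoupl})--(\ref{InNcoupl}) is
$$
Q_{\hbar,N}(t,Z_N)=\cM_{\hbar,N}(t)\,Q^{in}_{\hbar,N}(\cZ_N(0,t,Z_N))\,\cM_{\hbar,N}(t)^*\,,
$$
the classical transport and the unitary conjugation commuting because $\cH_{\hbar,N}$ does not depend on $Z_N$.

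For (a), self-adjointness and positivity of $Q_{\hbar,N}(t,Z_N)$ are preserved under unitary conjugation. By cyclicity of the trace and unitarity, $\Tr_{\fH_N}(Q_{\hbar,N}(t,Z_N))=\Tr_{\fH_N}(Q^{in}_{\hbar,N}(\cZ_N(0,t,Z_N)))=(f^{in})^{\otimes N}(\cZ_N(0,t,Z_N))$; since the single-particle Vlasov flow preserves Lebesgue measure, the Vlasov solution satisfies $f(t,z)=f^{in}(Z(0,t,z))$, so by the product structure of $\cZ_N$ this equals $f(t)^{\otimes N}(Z_N)$. Integrating in $Z_N$ and changing variables $W_N=\cZ_N(0,t,Z_N)$ (Jacobian $1$) gives $\int Q_{\hbar,N}(t,Z_N)dX_Nd\Xi_N=\cM_{\hbar,N}(t)\big(\int Q^{in}_{\hbar,N}(W_N)dW_N\big)\cM_{\hbar,N}(t)^*=\cM_{\hbar,N}(t)R^{in}_{\hbar,N}\cM_{\hbar,N}(t)^*=R_{\hbar,N}(t)$ by (\ref{NqDens(t)}). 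Hence $Q_{\hbar,N}(t)\in\cC(f(t)^{\otimes N},R_{\hbar,N}(t))$.

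For (b), I would combine the three equivariance facts: using the solution formula and the $\fS_N$-equivariance of $\cZ_N$, then applying the initial symmetry (\ref{NSym}) to $W_N=\cZ_N(0,t,Z_N)$, and finally sliding $U_\si^*$ and $U_\si$ past $\cM_{\hbar,N}(t)$ and $\cM_{\hbar,N}(t)^*$, one obtains $Q_{\hbar,N}(t,\si\cdot Z_N)=U_\si^*Q_{\hbar,N}(t,Z_N)U_\si$ (equivalently, one checks that $Z_N\mapsto U_\si^*Q_{\hbar,N}(t,\si\cdot Z_N)U_\si$ solves the same Cauchy problem and invokes uniqueness). The only genuinely delicate point in all of this is the construction and uniqueness of the solution of (\ref{EqNCoupl})--(\ref{InNcoupl}) and the justification of these manipulations at the low regularity assumed (weak solutions; $Q^{in}_{\hbar,N}$ merely measurable with a.e.\ trace-class, nonnegative values of finite trace $(f^{in})^{\otimes N}$), which proceeds exactly as in \cite{BoveDPF}; once the closed formula is available, everything reduces to the measure-preservation and permutation-equivariance of the mean-field $N$-particle flow together with the unitarity and $\fS_N$-invariance of the $N$-body propagator.
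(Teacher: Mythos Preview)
Your proposal is correct and follows essentially the same approach as the paper: you write the solution of (\ref{EqNCoupl})--(\ref{InNcoupl}) explicitly as the conjugation by $e^{-it\cH_{\hbar,N}/\hbar}$ of the initial coupling transported along the $N$-fold mean-field Vlasov flow, and then read off (a) from unitarity plus measure-preservation and (b) from the $\fS_N$-equivariance of both the classical flow and the quantum propagator. The only cosmetic difference is that for (b) the paper leads with the uniqueness argument (defining $\tilde Q_{\hbar,N}(t,X_N,\Xi_N):=U_\si Q_{\hbar,N}(t,\si\cdot X_N,\si\cdot\Xi_N)U_\si^*$ and checking it solves the same Cauchy problem), whereas you lead with the direct computation from the closed formula and mention uniqueness only parenthetically; both routes are equivalent here.
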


\smallskip
In particular, statement (a) implies that $\Tr(Q_{\hbar,N}(t))=f(t)^{\otimes N}$. This factorization property is of considerable importance for the proof of Theorem \ref{T-NSV} --- in particular for the consistency part of Step 4 below.

\begin{proof}
For each $(X_N,\Xi_N)\in(\bR^d)^N\times(\bR^d)^N$, let $t\mapsto Z_N(t,s,X_N,\Xi_N)$ be the integral curve of the time-dependent vector field
\be\lb{NVectField}
(\xi_1,\ldots,\xi_N,-\grad V\star_x\rho_f(t,x_1),\ldots,-\grad V\star_x\rho_f(t,x_N))
\ee
passing through $(X_N,\Xi_N)$ at time $t=s$. Then 
$$
Q_{\hbar,N}(t,X_N,\Xi_N)=e^{-it\cH_{\hbar,N}/\hbar}Q^{in}_{\hbar,N}(Z_N(0,t,X_N,\Xi_N))e^{it\cH_{\hbar,N}/\hbar}\,.
$$
Thus
$$
\ba
\Tr(Q_{\hbar,N}(t,X_N,\Xi_N))&=\Tr(Q^{in}_{\hbar,N}(Z_N(0,t,X_N,\Xi_N))
\\
&=(f^{in})^{\otimes N}(Z_N(0,t,X_N,\Xi_N))=\prod_{j=1}^Nf(t,x_j,\xi_j)\,,
\ea
$$
while
$$
\ba
\int_{(\bR^d)^N\times(\bR^d)^N}Q_{\hbar,N}(t,X_N,\Xi_N)dX_Nd\Xi_N
\\
=e^{-it\cH_{\hbar,N}/\hbar}\left(\int_{(\bR^d)^N\times(\bR^d)^N}Q^{in}_{\hbar,N}(Y_N,H_N)dY_NdH_N\right)e^{it\cH_{\hbar,N}/\hbar}
\\
=e^{-it\cH_{\hbar,N}/\hbar}R^{in}_{\hbar,N}e^{it\cH_{\hbar,N}/\hbar}=R_{\hbar,N}(t)\,,
\ea
$$
since the flow $Z_N$ leaves the Lebesgue measure of $(\bR^d)^N\times(\bR^d)^N$ invariant. This proves (a).

As for (b), observe that $\tilde Q_{\hbar,N}(t,X_N,\Xi_N):=U_\si Q_{\hbar,N}(t,\si\cdot X_N,\si\cdot\Xi_N)U^*_\si$ satisfies (\ref{EqNCoupl}), because $U_\si\cH_{\hbar,N}=\cH_{\hbar,N}U_\si$ while the vector field (\ref{NVectField}) is invariant 
under the transformation $(X_N,\Xi_N)\mapsto(\si\cdot X_N,\si\cdot\Xi_N)$. On the other hand $\tilde Q_{\hbar,N}(0)=Q^{in}_{\hbar,N}$ according to (\ref{NSym}). By uniqueness of the solution of the Cauchy problem for (\ref{EqNCoupl}), one 
has $\tilde Q_{\hbar,N}(t)=Q_{\hbar,N}(t)$ for all $t\ge 0$. This identity obviously holds for each $\si\in\fS_N$.
\end{proof}

\subsection{Step 2: Coupling BBGKY Hierarchies.}


Instead of working directly with the equation (\ref{EqNCoupl}) for $N$-particle couplings, we look at the hierarchy of equations for the $n$-particle marginals of $Q_{\hbar,N}$. 

\begin{Def}\label{defmargi}
For each $n=1,\ldots,N$, we define the $n$-particle marginal
of $Q_{\hbar,N}$, henceforth denoted $Q_{\hbar,N}^\mathbf{n}$, as follows: for a.e. $X_n,\Xi_n\in(\bR^d)^n$,
$$
Q_{\hbar,N}^\mathbf{n}(t,X_n,\Xi_n):=\int_{(\bR^d\times\bR^d)^{N-n}}[Q_{\hbar,N}(t,X_N,\Xi_N)]^\mathbf{n}dx_{n+1}d\xi_{n+1}\ldots dx_Nd\xi_N\,,
$$
where the $n$-particle marginal of the $N$-particle density $Q_{\hbar,N}(t,X_N,\Xi_N)$ has been defined in (\ref{nQMargi}).
\end{Def}

Integrating with respect to $x_2,\xi_2,\ldots,x_N,\xi_N$ and taking the $1$st particle marginal of both sides of (\ref{EqNCoupl}) leads to the equation
$$
\ba
\d_tQ^\mathbf{1}_{\hbar,N}(t,x_1,\xi_1)+(\xi_1\cdot\grad_{x_1}-\grad V\star_x\rho_f(t,x_1)\cdot\grad_{\xi_1})Q^\mathbf{1}_{\hbar,N}(t,x_1,\xi_1)&
\\
+\frac{i}{\hbar}\int_{(\bR^d\times\bR^d)^{N-1}}[\cH_{\hbar,N},Q_{\hbar,N}(t,X_N,\Xi_N)]^\mathbf{1}_Ndx_2d\xi_2\ldots,dx_Nd\xi_N=0&\,.
\ea
$$
Then
$$
\ba
\left[\cH_{\hbar,N},Q_{\hbar,N}(t,X_N,\Xi_N)\right]^\mathbf{1}_N=&[-\tfrac12\hbar^2\Dlt_{y_1},Q_{\hbar,N}(t,X_N,\Xi_N)^\mathbf{1}]_1
\\
&+\frac1N\sum_{j=2}^N[V(y_1-y_j),Q_{\hbar,N}(t,X_N,\Xi_N)]_N^\mathbf{1}\,.
\ea
$$
At this point, we integrate further in $x_2,\xi_2,\ldots,x_N,\xi_N$, and find that
$$
\ba
\int_{(\bR^d\times\bR^d)^{N-1}}\left[\cH_{\hbar,N},Q_{\hbar,N}(t,X_N,\Xi_N)\right]^\mathbf{1}_Ndx_2d\xi_2\ldots,dx_Nd\xi_N
\\
=\left[-\tfrac12\hbar^2\Dlt_{y_1},\int_{(\bR^d\times\bR^d)^{N-1}}Q_{\hbar,N}(t,X_N,\Xi_N)^\mathbf{1}dx_2d\xi_2\ldots,dx_Nd\xi_N\right]_1
\\
+\frac1N\sum_{j=2}^N\int_{(\bR^d\times\bR^d)^{N-1}}[V(y_1-y_j),Q_{\hbar,N}(t,X_N,\Xi_N)]_N^\mathbf{1}dx_2d\xi_2\ldots,dx_Nd\xi_N\,.
\ea
$$
Using the symmetry relation (b) in Lemma \ref{L-NQCoupl}, we observe that
$$
[V(y_1-y_j),Q_{\hbar,N}(t,X_N,\Xi_N)]_N^\mathbf{1}=[V(y_1-y_2),Q_{\hbar,N}(t,\si\cdot X_N,\si\cdot\Xi_N)]_N^\mathbf{1}
$$
where $\si$ is the permutation exchanging $2$ and $j$ and leaving all the other indices invariant. Hence
$$
\ba
\int_{(\bR^d\times\bR^d)^{N-1}}&\left[\cH_{\hbar,N},Q_{\hbar,N}(t,X_N,\Xi_N)\right]^\mathbf{1}_Ndx_2d\xi_2\ldots,dx_Nd\xi_N
\\
&=[-\tfrac12\hbar^2\Dlt_{y_1},Q^\mathbf{1}_{\hbar,N}(t,x_1,\xi_1)]_1
\\
&+\frac{N-1}N\int_{\bR^d\times\bR^d}[V(y_1-y_2),Q^\mathbf{2}_{\hbar,N}(t,x_1,x_2,\xi_1,\xi_2)]_2^\mathbf{1}dx_2d\xi_2\,.
\ea
$$

Eventually, we arrive at the equation for $Q_{\hbar,N}^\mathbf{1}$, which is
\be\lb{BBGKY1}
\ba
\d_tQ^\mathbf{1}_{\hbar,N}(t,x_1,\xi_1)&
\\
+(\xi_1\cdot\grad_{x_1}-\grad V\star_x\rho_f(t,x_1)\cdot\grad_{\xi_1})Q^\mathbf{1}_{\hbar,N}(t,x_1,\xi_1)-\tfrac12i\hbar[\Dlt_{y_1},Q^\mathbf{1}_{\hbar,N}(t,x_1,\xi_1)]_1&
\\
+\frac{i}{\hbar}\int_{\bR^d\times\bR^d}[\tfrac{N-1}NV(y_1-y_2),Q^\mathbf{2}_{\hbar,N}(t,x_1,x_2,\xi_1,\xi_2)]_2^\mathbf{1}dx_2d\xi_2=0&\,.
\ea
\ee
This equation couples the first particle marginals of $Q_{\hbar,N}$, and is not in closed form, as it involves the $2$-particle marginal $Q^\mathbf{2}_{\hbar,N}$. Therefore, this equation can be regarded as coupling the first equation
in the BBGKY hierarchy for the $N$-particle Schr\"odinger equation with the Vlasov equation. 

\begin{Rmk}
Proceeding in the same way, one could also write the analogous equation coupling the $n$-th equation in the BBGKY hierarchy for the $N$-particle Schr\"odinger equation with the equation satisfied by the $n$-fold tensor product 
of the Vlasov solution:
\be\lb{BBGKYn}
\ba
\d_tQ^\mathbf{n}_{\hbar,N}(t,X_n,\Xi_n)+\sum_{j=1}^n(\xi_j\cdot\grad_{x_j}-\grad V\star_x\rho_f(t,x_j)\cdot\grad_{\xi_j})Q^\mathbf{n}_{\hbar,N}(t,X_n,\Xi_n)
\\
+\left[-\tfrac12i\hbar\sum_{j=1}^n\Dlt_{y_j}+\frac{i}{2N\hbar}\sum_{j,k=1}^nV(y_j-y_k),Q^\mathbf{n}_{\hbar,N}(t,X_n,\Xi_n)\right]_n&
\\
+\frac{i}{\hbar}\sum_{j=1}^n\int_{\bR^d\times\bR^d}[\tfrac{N-n}NV(y_j-y_{n+1}),Q^\mathbf{n+1}_{\hbar,N}(t,X_{n+1},\Xi_{n+1})]_{n+1}^\mathbf{n}dx_{n+1}d\xi_{n+1}=0&\,.
\ea
\ee
For $n=N$, this equation obviously coincides with the original equation (\ref{EqNCoupl}), with the convention $Q_{\hbar,N}^\mathbf{N+1}:=0$. Analogously, we recall that the last equation in the BBGKY equation coincides with the 
$N$-particle Liouville, or von Neumann equation. 
\end{Rmk}

In the course of the proof, we shall use \textit{only the first }equation (\ref{BBGKY1}) in the hierarchy of equations (\ref{BBGKYn}) for $n=1,\ldots,N-1$, together with the original equation (\ref{EqNCoupl}). Equations (\ref{BBGKY1}) and
(\ref{EqNCoupl}) are used \textit{for two different purposes} in the proof. We shall return to this later.

\subsection{Step 3: On Moments and Symmetries of $Q_{\hbar,N}(t)$.}


Henceforth we denote
$$
c_{\hbar,j}(x,\xi):=\tfrac12(|x-y_j|^2+|\xi+i\hbar\grad_{y_j}|^2)\,,\quad j=1,\ldots,N\,.
$$
Define
$$
\cD_{\hbar,N}(t):=\int_{(\bR^d\times\bR^d)^N}\frac1N\sum_{j=1}^N\Tr_{\fH_N}(c_{\hbar,j}(x_j,\xi_j)Q_{\hbar,N}(t,X_N,\Xi_N))dX_Nd\Xi_N\,.
$$

\begin{Lem}\lb{L-DMargi}
For each $n=1,\ldots,N$ and each $t\ge 0$, one has
$$
\cD_{\hbar,N}(t)=\int_{(\bR^d\times\bR^d)^n}\frac1n\sum_{j=1}^n\Tr_{\fH_n}(c_{\hbar,j}(x_j,\xi_j)Q^\mathbf{n}_{\hbar,N}(t,X_n,\Xi_n))dX_nd\Xi_n
$$
\end{Lem}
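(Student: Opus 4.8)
The plan is to prove that, for every $n\in\{1,\ldots,N\}$, the $n$-particle expression on the right-hand side of the asserted identity equals the $n=1$ expression
$$
\int_{\bR^d\times\bR^d}\Tr_{\fH_1}(c_{\hbar,1}(x_1,\xi_1)Q^\mathbf{1}_{\hbar,N}(t,x_1,\xi_1))dx_1d\xi_1\,.
$$
Since $Q^\mathbf{N}_{\hbar,N}=Q_{\hbar,N}$, the case $n=N$ of this expression is exactly $\cD_{\hbar,N}(t)$, so the lemma follows at once from this reduction. The argument proceeds in two moves: a symmetrization that removes the sum over $j$, and a ``collapsing'' of the superfluous Hilbert and phase-space variables using the definition of the marginals.

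For the symmetrization I would first note that, by statement (b) in Lemma \ref{L-NQCoupl}, the marginal $Q^\mathbf{n}_{\hbar,N}$ inherits the symmetry $Q^\mathbf{n}_{\hbar,N}(t,\si\cdot X_n,\si\cdot\Xi_n)=U^*_\si Q^\mathbf{n}_{\hbar,N}(t,X_n,\Xi_n)U_\si$ for each $\si\in\fS_n$. Fixing $j\in\{1,\ldots,n\}$, taking $\si$ to be the transposition of $1$ and $j$, and performing the Lebesgue-measure-preserving change of variables $(X_n,\Xi_n)\mapsto(\si\cdot X_n,\si\cdot\Xi_n)$ in the $j$-th summand, one uses cyclicity of the trace together with the identity $U_\si c_{\hbar,j}(x_1,\xi_1)U^*_\si=c_{\hbar,1}(x_1,\xi_1)$ (conjugation by $U_\si$ turns an operator acting on the $y_j$ variable into the same operator acting on $y_1$) to conclude that each summand equals the $j=1$ one. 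Hence the $n$-particle expression reduces to $\int_{(\bR^d\times\bR^d)^n}\Tr_{\fH_n}(c_{\hbar,1}(x_1,\xi_1)Q^\mathbf{n}_{\hbar,N}(t,X_n,\Xi_n))dX_nd\Xi_n$.

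Next I would collapse the extra variables. Since $c_{\hbar,1}(x_1,\xi_1)$ acts only on the first factor of $\fH_n\simeq\fH_1\otimes\fH_{n-1}$, the defining relation (\ref{nQMargi}) of the partial trace (applied with $\fH_n$ in place of $\fH_N$ and $n=1$ there) gives $\Tr_{\fH_n}(c_{\hbar,1}Q^\mathbf{n}_{\hbar,N})=\Tr_{\fH_1}(c_{\hbar,1}[Q^\mathbf{n}_{\hbar,N}]^\mathbf{1})$; as $c_{\hbar,1}$ is unbounded this is established first for the bounded truncations $c_{\hbar,1}\wedge M$ and then passed to the limit $M\to\infty$ by monotone convergence, consistently with the convention $\Tr(RT)=\Tr(R^{1/2}TR^{1/2})$ for $T$ nonnegative self-adjoint. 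Integrating over $(X_n,\Xi_n)$, pulling $c_{\hbar,1}$ out of the integral over $x_2,\xi_2,\ldots,x_n,\xi_n$ (legitimate by Tonelli, since $c_{\hbar,1}\ge 0$ and $[Q^\mathbf{n}_{\hbar,N}]^\mathbf{1}\ge 0$), and combining the tower property $[Q_{\hbar,N}]^\mathbf{1}=[[Q_{\hbar,N}]^\mathbf{n}]^\mathbf{1}$ with the fact that partial trace commutes with the integral over $x_{n+1},\xi_{n+1},\ldots,x_N,\xi_N$, one gets from Definition \ref{defmargi} that $\int_{(\bR^d\times\bR^d)^{n-1}}[Q^\mathbf{n}_{\hbar,N}(t,X_n,\Xi_n)]^\mathbf{1}dx_2d\xi_2\ldots dx_nd\xi_n=Q^\mathbf{1}_{\hbar,N}(t,x_1,\xi_1)$. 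Assembling these identities produces exactly the $n=1$ expression, which is independent of $n$, as claimed.

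The only genuine obstacle is the bookkeeping in this last move: justifying the exchanges of the spatial integrations with the (partial) traces and with the monotone limit in $M$. I expect these to be entirely routine once one observes that every operator in sight is nonnegative, so all quantities live a priori in $[0,+\infty]$ and both Tonelli's theorem and the monotone convergence theorem apply with no integrability hypothesis.
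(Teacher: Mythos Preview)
Your proposal is correct and follows essentially the same approach as the paper: reduce the sum over $j$ to the single term $j=1$ via the transposition symmetry, and then collapse to the $1$-particle marginal using the definition of $Q^\mathbf{n}_{\hbar,N}$. The only organizational difference is that the paper performs the symmetrization directly at the $N$-particle level (using Lemma~\ref{L-NQCoupl}(b) on $Q_{\hbar,N}$ itself) and then passes to marginals, whereas you first invoke the inherited symmetry of $Q^\mathbf{n}_{\hbar,N}$ and symmetrize there; the underlying computation is the same.
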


\begin{proof}
By statement (b) in Lemma \ref{L-NQCoupl}, one has
$$
\ba
\int_{(\bR^d\times\bR^d)^N}\Tr_{\fH_N}(c_{\hbar,j}(x_j,\xi_j)Q_{\hbar,N}(t,X_N,\Xi_N))dX_Nd\Xi_N
\\
=\int_{(\bR^d\times\bR^d)^N}\Tr_{\fH_N}(U^*_\si c_{\hbar,1}(x_j,\xi_j)U_\si Q_{\hbar,N}(t,X_N,\Xi_N))dX_Nd\Xi_N
\\
=\int_{(\bR^d\times\bR^d)^N}\Tr_{\fH_N}(c_{\hbar,1}(x_1,\xi_1)U_\si Q_{\hbar,N}(t,\si\cdot X_N,\si\cdot\Xi_N)U^*_\si)dX_Nd\Xi_N
\\
=\int_{(\bR^d\times\bR^d)^N}\Tr_{\fH_N}(c_{\hbar,1}(x_1,\xi_1)Q_{\hbar,N}(t,X_N,\Xi_N))dX_Nd\Xi_N
\ea
$$
if $\si$ is the permutation exchanging the indices $1$ and $j$, and leaving the other indices invariant. Therefore
$$
\ba
\cD_{\hbar,N}(t)=\int_{(\bR^d\times\bR^d)^N}\Tr_{\fH_N}(c_{\hbar,1}(x_1,\xi_1)Q_{\hbar,N}(t,X_N,\Xi_N))dX_Nd\Xi_N&
\\
=\int_{\bR^d\times\bR^d}\Tr_{\fH}(c_{\hbar,1}(x_1,\xi_1)Q^\mathbf{1}_{\hbar,N}(t,x_1,\xi_1))dx_1d\xi_1&\,,
\ea
$$
and by the same token
$$
\ba
\cD_{\hbar,N}(t)&=\int_{(\bR^d\times\bR^d)^N}\frac1n\sum_{j=1}^n\Tr_{\fH_N}(c_{\hbar,j}(x_j,\xi_j)Q_{\hbar,N}(t,X_N,\Xi_N))dX_Nd\Xi_N
\\
&=\int_{(\bR^d\times\bR^d)^n}\frac1n\sum_{j=1}^n\Tr_{\fH_n}(c_{\hbar,j}(x_j,\xi_j)Q^\mathbf{n}_{\hbar,N}(t,X_n,\Xi_n))dX_nd\Xi_n
\ea
$$
for all $n=1,\ldots,N$.
\end{proof}

\subsection{Step 4: a Differential Inequality for $\cD_{\hbar,N}(t)$.}


The most important step in the proof of Theorem \ref{T-NSV} is to obtain a differential inequality controlling the convergence rate of $R_{\hbar,N}$ to $f$ in some appropriate sense. This control involves two different arguments, referred to
as ``stability'' and ``consistency'' by analogy with the Lax equivalence theorem \cite{LaxRicht} in numerical analysis.

\subsubsection{Stability}


Multiplying both sides of (\ref{BBGKY1}) by $c_{\hbar,1}(x_1,\xi_1)$, integrating in $x_1,\xi_1$ and taking the trace of both sides of the resulting equality, we arrive at the following identity:
$$
\ba
\frac{d \cD_{\hbar,N}}{dt}(t)
\\
\!=\!\int_{\bR^d\times\bR^d}\!\!\!\Tr_{\fH}(Q_{\hbar,N}^\mathbf{1}(t,x_1,\xi_1)(\xi_1\!\cdot\!\grad_{x_1}c_{\hbar,1}(x_1,\xi_1)\!-\!\tfrac12i\hbar[\Dlt_{y_1},c_{\hbar,1}(x_1,\xi_1)])dx_1d\xi_1
\\
-\int_{\bR^d\times\bR^d}\!\Tr_{\fH}(Q_{\hbar,N}^\mathbf{1}(t,x_1,\xi_1)\grad V\star_x\rho_f(t,x_1)\cdot\grad_{\xi_1}c_{\hbar,1}(x_1,\xi_1))dx_1d\xi_1
\\
+\frac{i}{\hbar}\frac{N-1}N\int_{\bR^d\times\bR^d}\Tr(Q^\mathbf{2}_{\hbar,N}(t,X_2,\Xi_2)[V(y_1-y_2),c_{\hbar,1}(x_1,\xi_1)]_2)dx_2d\xi_2&\,.
\ea
$$
We shall use the following auxiliary computations:
$$
\xi_1\cdot\grad_{x_1}c_{\hbar,1}(x_1,\xi_1)=\xi_1\cdot\grad_{x_1}\tfrac12|x_1-y_1|^2=\xi_1\cdot(x_1-y_1)\,,
$$
while
$$
\ba
-\tfrac12i\hbar[\Dlt_{y_1},c_{\hbar,1}(x_1,\xi_1)]=&-\tfrac12i\hbar\grad_{y_1}\cdot[\grad_{y_1},\tfrac12|x_1-y_1|^2]
\\
&-\tfrac12i\hbar[\grad_{y_1},\tfrac12|x_1-y_1|^2]\cdot\grad_{y_1}
\\
=&-\tfrac12i\hbar\grad_{y_1}\cdot(y_1-x_1)-\tfrac12i\hbar(y_1-x_1)\cdot\grad_{y_1}
\ea
$$
so that
$$
\ba
(\xi_1\cdot\grad_{x_1}c_{\hbar,1}(x_1,\xi_1)-\tfrac12i\hbar[\Dlt_{y_1},c_{\hbar,1}(x_1,\xi_1)]
\\
=\tfrac12(\xi_1+i\hbar\grad_{y_1})\cdot(x_1-y_1)+\tfrac12(x_1-y_1)\cdot(\xi_1+i\hbar\grad_{y_1})&\,.
\ea
$$
Likewise
$$
\ba
-\grad V\star_x\rho_f(t,x_1)\cdot\grad_{\xi_1}c_{\hbar,1}(x_1,\xi_1)=-\grad V\star_x\rho_f(t,x_1)\cdot\grad_{\xi_1}\tfrac12|\xi_1+i\hbar\grad_{y_1}|^2
\\
=-\grad V\star_x\rho_f(t,x_1)\cdot(\xi_1+i\hbar\grad_{y_1})&\,,
\ea
$$
while
$$
\ba
\frac{i}{\hbar}[V(y_1-y_2),c_{\hbar,1}(x_1,\xi_1)]_2=\frac{i}{\hbar}[V(y_1-y_2),\tfrac12|\xi_1+i\hbar\grad_{y_1}|^2]_2
\\
=-\frac{i}{2\hbar}(\xi_1+i\hbar\grad_{y_1})\cdot[(\xi_1+i\hbar\grad_{y_1}),V(y_1-y_2)]_2
\\
-\frac{i}{2\hbar}[(\xi_1+i\hbar\grad_{y_1}),V(y_1-y_2)]_2\cdot(\xi_1+i\hbar\grad_{y_1})
\\
=\tfrac12(\xi_1+i\hbar\grad_{y_1})\cdot\grad V(y_1-y_2)+\tfrac12\grad V(y_1-y_2)\cdot(\xi_1+i\hbar\grad_{y_1})&\,.
\ea
$$

Hence
$$
\ba
\frac{d \cD_{\hbar,N}}{dt}(t)=\tfrac12\int_{\bR^d\times\bR^d}\Tr_{\fH}(Q_{\hbar,N}^\mathbf{1}(t,x_1,\xi_1)(\xi_1+i\hbar\grad_{y_1})\cdot(x_1-y_1))dx_1d\xi_1
\\
+\tfrac12\int_{\bR^d\times\bR^d}\Tr_{\fH}(Q_{\hbar,N}^\mathbf{1}(t,x_1,\xi_1)(x_1-y_1)\cdot(\xi_1+i\hbar\grad_{y_1}))dx_1d\xi_1
\\
-\int_{\bR^d\times\bR^d}\Tr_{\fH}(Q_{\hbar,N}^\mathbf{1}(t,x_1,\xi_1)\grad V\star_x\rho_f(t,x_1)\cdot(\xi_1+i\hbar\grad_{y_1}))dx_1d\xi_1
\\
+\frac{N-1}{2N}\int_{(\bR^d\times\bR^d)^2}\Tr_{\fH_2}(Q^\mathbf{2}_{\hbar,N}(t,X_2,\Xi_2)(\xi_1+i\hbar\grad_{y_1})\cdot\grad V(y_1-y_2))dX_2d\Xi_2
\\
+\frac{N-1}{2N}\int_{(\bR^d\times\bR^d)^2}\Tr_{\fH_2}(Q^\mathbf{2}_{\hbar,N}(t,X_2,\Xi_2)\grad V(y_1-y_2)\cdot(\xi_1+i\hbar\grad_{y_1}))dX_2d\Xi_2&\,.
\ea
$$

Using the inequality (\ref{NCCS}) with $A=x_1-y_1$ and $B=\xi_1+i\hbar\grad_{y_1}$ shows that
$$
\ba
\frac{d \cD_{\hbar,N}}{dt}(t)\le \cD_{\hbar,N}(t)
\\
-\int_{(\bR^d\times\bR^d)^2}\Tr_{\fH_2}(Q_{\hbar,N}^\mathbf{2}(t,X_2,\Xi_2)\cV(t,x_1,x_2)\cdot(\xi_1+i\hbar\grad_{y_1}))dX_2d\Xi_2
\\
-\frac{N-1}{2N}\int_{(\bR^d\times\bR^d)^2}\Tr_{\fH_2}(Q^\mathbf{2}_{\hbar,N}(t,X_2,\Xi_2)(\xi_1+i\hbar\grad_{y_1})\cdot\cW(X_2,Y_2))dX_2d\Xi_2
\\
-\frac{N-1}{2N}\int_{(\bR^d\times\bR^d)^2}\Tr_{\fH_2}(Q^\mathbf{2}_{\hbar,N}(t,X_2,\Xi_2)\cW(X_2,Y_2)\cdot(\xi_1+i\hbar\grad_{y_1}))dX_2d\Xi_2&\,,
\ea
$$
with the notation
$$
\cV(t,x_1,x_2):=\grad V\star_x\rho_f(t,x_1)-\tfrac{N-1}N\grad V(x_1-x_2)
$$
and
$$
\cW(X_2,Y_2):=\grad V(x_1-x_2)-\grad V(y_1-y_2)\,.
$$
Using again inequality (\ref{NCCS}) with $A=\cW(X_2,Y_2)$ and $B=\xi_1+i\hbar\grad_{y_1}$ shows that
$$
\ba
\frac{d \cD_{\hbar,N}}{dt}(t)\le \cD_{\hbar,N}(t)
\\
-\int_{(\bR^d\times\bR^d)^2}\Tr_{\fH_2}(Q_{\hbar,N}^\mathbf{2}(t,X_2,\Xi_2)\cV(t,x_1,x_2)\cdot(\xi_1+i\hbar\grad_{y_1}))dX_2d\Xi_2
\\
+\frac{N-1}{2N}\int_{(\bR^d\times\bR^d)^2}\Tr_{\fH_2}(Q^\mathbf{2}_{\hbar,N}(t,X_2,\Xi_2)|\xi_1+i\hbar\grad_{y_1}|^2)dX_2d\Xi_2
\\
+\frac{N-1}{2N}\int_{(\bR^d\times\bR^d)^2}\Tr_{\fH_2}(Q^\mathbf{2}_{\hbar,N}(t,X_2,\Xi_2)|\cW(X_2,Y_2)|^2)dX_2d\Xi_2&\,.
\ea
$$
Hence
$$
\ba
\frac{d \cD_{\hbar,N}}{dt}(t)\le \cD_{\hbar,N}(t)
\\
-\int_{(\bR^d\times\bR^d)^2}\Tr_{\fH_2}(Q_{\hbar,N}^\mathbf{2}(t,X_2,\Xi_2)\cV(t,x_1,x_2)\cdot(\xi_1+i\hbar\grad_{y_1}))dX_2d\Xi_2
\\
+\tfrac12\int_{\bR^d\times\bR^d}\Tr_{\fH}(Q^\mathbf{1}_{\hbar,N}(t,x_1,\xi_1)|\xi_1+i\hbar\grad_{y_1}|^2)dx_1d\xi_1
\\
+L^2\int_{(\bR^d\times\bR^d)^2}\Tr_{\fH_2}(Q^\mathbf{2}_{\hbar,N}(t,X_2,\Xi_2)(|x_1-y_1|^2+|x_2-y_2|^2))dX_2d\Xi_2
\\
=\cD_{\hbar,N}(t)
\\
-\int_{(\bR^d\times\bR^d)^2}\Tr_{\fH_2}(Q_{\hbar,N}^\mathbf{2}(t,X_2,\Xi_2)\cV(t,x_1,x_2)\cdot(\xi_1+i\hbar\grad_{y_1}))dX_2d\Xi_2
\\
+\tfrac12\int_{\bR^d\times\bR^d}\Tr_{\fH}(Q^\mathbf{1}_{\hbar,N}(t,x_1,\xi_1)|\xi_1+i\hbar\grad_{y_1}|^2)dx_1d\xi_1
\\
+2L^2\int_{\bR^d\times\bR^d}\Tr_{\fH}(Q^\mathbf{1}_{\hbar,N}(t,x_1,\xi_1)|x_1-y_1|^2)dx_1d\xi_1\,,
\ea
$$
where the inequality follows from the Lipschitz continuity of $\grad V$ and the equality from the fact that $Q_{\hbar,N}$ is symmetric.

Eventually, we arrive at the inequality
$$
\ba
\frac{d \cD_{\hbar,N}}{dt}(t)\le(1+\max(1,4\Lip(\grad V)^2))\cD_{\hbar,N}(t)&
\\
-\int_{(\bR^d\times\bR^d)^2}\Tr_{\fH_2}(Q_{\hbar,N}^\mathbf{2}(t,X_2,\Xi_2)\cV(t,x_1,x_2)\cdot(\xi_1+i\hbar\grad_{y_1}))dX_2d\Xi_2&\,.
\ea
$$

\subsubsection{Consistency}


The consistency part of the proof is the control of the term
$$
\ba
\int_{(\bR^d\times\bR^d)^2}\Tr_{\fH}(Q_{\hbar,N}^\mathbf{2}(t,X_2,\Xi_2)\cV(t,x_1,x_2)\cdot(\xi_1+i\hbar\grad_{y_1}))dX_2d\Xi_2&
\ea
$$
on the right hand side of the inequality above.

At this point, we undo the symmetry reduction leading to equation (\ref{BBGKY1}), and distribute the interaction of particle $1$ with particle $2$ evenly into interactions of particle $1$ with particles $2,\ldots,N$.  In other words
$$
\ba
\int_{(\bR^d\times\bR^d)^2}\Tr_{\fH_2}(Q_{\hbar,N}^\mathbf{2}(t,X_2,\Xi_2)\cV(t,x_1,x_2)\cdot(\xi_1+i\hbar\grad_{y_1}))dX_2d\Xi_2
\\
=\int_{(\bR^d\times\bR^d)^N}\Tr_{\fH_N}(Q_{\hbar,N}(t,\si\cdot X_N,\si\cdot\Xi_N)\cV(t,x_1,x_j)\cdot(\xi_1+i\hbar\grad_{y_1}))dX_Nd\Xi_N
\\
=\int_{(\bR^d\times\bR^d)^N}\Tr_{\fH_N}(Q_{\hbar,N}(t,X_N,\Xi_N)U_\si\cV(t,x_1,x_j)\cdot(\xi_1+i\hbar\grad_{y_1})U^*_\si)dX_Nd\Xi_N
\\
=\int_{(\bR^d\times\bR^d)^N}\Tr_{\fH_N}(Q_{\hbar,N}(t,X_N,\Xi_N)\cV(t,x_1,x_j)\cdot(\xi_1+i\hbar\grad_{y_1}))dX_Nd\Xi_N
\ea
$$
for all $j=2,\ldots,N$, where $\si$ is the permutation exchanging $2$ and $j$ and leaving all the other indices invariant. 
Therefore
$$
\ba
\int_{(\bR^d\times\bR^d)^2}\Tr_{\fH}(Q_{\hbar,N}^\mathbf{2}(t,X_2,\Xi_2)\cV(t,x_1,x_2)\cdot(\xi_1+i\hbar\grad_{y_1}))dX_2d\Xi_2&
\\
=\int_{(\bR^d\times\bR^d)^N}\Tr_{\fH_N}\left(Q_{\hbar,N}\tfrac1{N-1}\sum_{j=2}^N\cV(t,x_1,x_j)\cdot(\xi_1+i\hbar\grad_{y_1})\right)dX_Nd\Xi_N&\,.
\ea
$$
Applying inequality (\ref{NCCS}) with 
$$
A=\tfrac1{N-1}\sum_{j=2}^N\cV(t,x_1,x_j)\,,\quad B=\xi_1+i\hbar\grad_{y_1}
$$ 
shows that
$$
\ba
\int_{(\bR^d\times\bR^d)^2}\Tr_{\fH_2}(Q_{\hbar,N}^\mathbf{2}(t,X_2,\Xi_2)\cV(t,x_1,x_2)\cdot(\xi_1+i\hbar\grad_{y_1}))dX_2d\Xi_2&
\\
\le\tfrac12\int_{(\bR^d\times\bR^d)^N}\Tr_{\fH_N}\left(Q_{\hbar,N}\left|\tfrac1{N-1}\sum_{j=2}^N\cV(t,x_1,x_j)\right|^2\right)dX_Nd\Xi_N&
\\
+\tfrac12\int_{(\bR^d\times\bR^d)^N}\Tr_{\fH_N}(Q_{\hbar,N}|\xi_1+i\hbar\grad_{y_1}|^2)dX_Nd\Xi_N&\,.
\ea
$$
Since $Q_{\hbar,N}(t,x,\xi)$ acts on the $Y_N$ variables only and $\Tr_{\fH_N}(Q_{\hbar,N}(t))=f(t)^{\otimes N}$ (see Lemma \ref{L-NQCoupl} (a), and the remark thereafter), one has
$$
\ba
\int_{(\bR^d\times\bR^d)^N}\Tr_{\fH_N}\left(Q_{\hbar,N}\left|\tfrac1{N-1}\sum_{j=2}^N\cV(t,x_1,x_j)\right|^2\right)dX_Nd\Xi_N
\\
=\int_{(\bR^d\times\bR^d)^N}f(t)^{\otimes N}(X_N,\Xi_N)\left|\tfrac1{N-1}\sum_{j=2}^N\cV(t,x_1,x_j)\right|^2dX_Nd\Xi_N
\\
=\int_{(\bR^d)^N}\left|\tfrac1{N-1}\sum_{j=2}^N\cV(t,x_1,x_j)\right|^2\rho_f(t)^{\otimes N}(X_N)dX_N&\,,
\ea
$$
where the last equality follows from the fact that the potential $V$ is independent of the momentum variable $\xi$.

This last term is mastered as follows (see Lemma 3.3 in \cite{FGMouPaul} in the case $p=2$ for more details):
$$
\ba
\int_{(\bR^d)^N}\left|\tfrac1{N-1}\sum_{j=2}^N\cV(t,x_1,x_j)\right|^2\rho_f(t)^{\otimes N}(X_N)dX_N
\\
=\frac1{(N-1)^2}\int_{(\bR^d)^2}\sum_{j=2}^N|\cV(t,x_1,x_j)|^2\rho_f(t,x_1)\rho_f(t,x_j)dx_1dx_j
\\
+\frac{N-2}{N-1}\int_{\bR^d}\left(\int_{\bR^d}\cV(t,x_1,x_2)\rho_f(t,x_2)dx_2\right)^2\rho_f(t,x_1)dx_1
\\
\le\frac{2}{N-1}(2\|\grad V\|_{L^\infty})^2\,.
\ea
$$

Finally, we have proved that
$$
\ba
\frac{d \cD_{\hbar,N}}{dt}(t)\le&(1+\max(1,4\Lip(\grad V)^2))\cD_{\hbar,N}(t)
\\
&+\tfrac12\int_{(\bR^d\times\bR^d)^N}\Tr_{\fH_N}(Q_{\hbar,N}|\xi_1+i\hbar\grad_{y_1}|^2)dX_Nd\Xi_N
\\
&+\tfrac12\int_{(\bR^d)^N}\left|\tfrac1{N-1}\sum_{j=2}^N\cV(t,x_1,x_j)\right|^2\rho_f(t)^{\otimes N}(X_N)dX_N
\\
\le&(2+\max(1,4\Lip(\grad V)^2))\cD_{\hbar,N}(t)+\frac{(2\|\grad V\|_{L^\infty})^2}{N-1}\,.
\ea
$$
By Gronwall's inequality
$$
\cD_{\hbar,N}(t)\le \cD_{\hbar,N}(0)e^{\Gamma t}+\frac{(2\|\grad V\|_{L^\infty})^2}{N-1}\frac{e^{\Gamma t}-1}{\Gamma }.
$$

\subsection{Step 5: Conclusion.}\label{concl}


Observe that, for each $n=1,\ldots,N$
$$
Q^\mathbf{n}_{\hbar,N}(t)\in\cC(f(t)^{\otimes n},R^\mathbf{n}_{\hbar,N}(t))\,,\quad\hbox{ for each }t\ge 0\,.
$$
Indeed
$$
\ba
\Tr_{\fH_n}Q^\mathbf{n}_{\hbar,N}(t,X_n,\Xi_n)&
\\
=\Tr_{\fH_n}\int_{(\bR^d\times\bR^d)^{N-n}}[Q_{\hbar,N}(t,X_N,\Xi_N)]^\mathbf{n}dx_{n+1}d\xi_{n+1}\ldots dx_Nd\xi_N&
\\
=\int_{(\bR^d\times\bR^d)^{N-n}}\Tr_{\fH_N}Q_{\hbar,N}(t,X_N,\Xi_N)dx_{n+1}d\xi_{n+1}\ldots dx_Nd\xi_N&
\\
=\int_{(\bR^d\times\bR^d)^{N-n}}f^{\otimes N}(t,X_N,\Xi_N)dx_{n+1}d\xi_{n+1}\ldots dx_Nd\xi_N&
\\
=f^{\otimes n}(t,X_n,\Xi_n)&\,,
\ea
$$
while
$$
\ba
\int_{(\bR^d\times\bR^d)^n}Q^\mathbf{n}_{\hbar,N}(t,X_n,\Xi_n)dX_nd\Xi_n&
\\
=\left[\int_{(\bR^d\times\bR^d)^N}Q_{\hbar,N}(t,X_N,\Xi_N)dX_Nd\Xi_N\right]^\mathbf{n}=R^\mathbf{n}_{\hbar,N}(t)&\,.
\ea
$$
Besides, specializing the symmetry in Lemma \ref{L-NQCoupl} (b) to the case where $\si(k)=k$ for each $k=n+1,\ldots,N$ shows that 
\be\lb{CouplMargi}
Q^\mathbf{n}_{\hbar,N}(t)\in\cC^s(f(t)^{\otimes n},R^\mathbf{n}_{\hbar,N}(t))\,,\quad\hbox{ for each }t\ge 0\,.
\ee

By definition of the pseudo-distance $E_\hbar$, one has
$$
\frac1nE_\hbar(f(t)^{\otimes n},R_{\hbar, N}^\mathbf{n}(t))\le \cD_{\hbar,N}(0)e^{\Gamma t}+\frac{(2\|\grad V\|_{L^\infty})^2}{N-1}\frac{e^{\Gamma t}-1}{\Gamma }
$$
for each $n=1,\ldots,N$ and each $t\ge 0$. Minimizing the right hand side of the inequality above as the initial $Q^{in}_{\hbar,N}$ runs through $\cC^s((f^{in})^{\otimes N},R_{\hbar,N}^{in})$ , we arrive at the inequality
$$
\frac1nE_\hbar(f(t)^{\otimes n},R_{\hbar, N}^\mathbf{n}(t))\le\frac1NE_\hbar((f^{in})^{\otimes N},R^{in}_{\hbar, N})e^{\Gamma t}+\frac{(2\|\grad V\|_{L^\infty})^2}{N-1}\frac{e^{\Gamma t}-1}{\Gamma }\,.
$$
(Indeed
$$
\ba
\int_{(\bR^d\times\bR^d)^N}\Tr_{\fH_N}(c_{\hbar,j}(x_j,\xi_j)Q^{in}_{\hbar,N}(X_N,\Xi_N))dX_Nd\Xi_N
\\
=\int_{(\bR^d\times\bR^d)^N}\Tr_{\fH_N}(c_{\hbar,1}(x_1,\xi_1)U_\si Q^{in}_{\hbar,N}(\si\cdot X_N,\si\cdot\Xi_N)U^*_\si)dX_Nd\Xi_N
\ea
$$
if $\si$ is the transposition exchanging $1$ and $j$, and therefore
\be\lb{MinSym}
\ba
\int_{(\bR^d\times\bR^d)^N}\frac1N\sum_{j=1}^N\Tr_{\fH_N}(c_{\hbar,j}(x_j,\xi_j)Q^{in}_{\hbar,N}(X_N,\Xi_N))dX_Nd\Xi_N
\\
=
\int_{(\bR^d\times\bR^d)^N}\frac1N\sum_{j=1}^N\Tr_{\fH_N}(c_{\hbar,j}(x_j,\xi_j)Q^{in,sym}_{\hbar,N}(X_N,\Xi_N))dX_Nd\Xi_N
\ea
\ee
where
$$
Q^{in,sym}_{\hbar,N}(X_N,\Xi_N)=\frac1{N!}\sum_{\si\in\fS_N}U_\si Q^{in}_{\hbar,N}(\si\cdot X_N,\si\cdot\Xi_N)U^*_\si\,,
$$
so that there is no loss of generality in assuming that $Q^{in}_{\hbar,N}$ satisfies the symmetry (\ref{NSym}).)

By statement (2) in Theorem \ref{T-PtyE}
$$
\ba
\frac1n\MKd(f(t)^{\otimes n},\widetilde W_\hbar[R_{\hbar, N}^\mathbf{n}(t)])^2
\\
\le\frac1NE_\hbar((f^{in})^{\otimes N},R^{in}_{\hbar, N})e^{\Gamma t}+\frac{(2\|\grad V\|_{L^\infty})^2}{N-1}\frac{e^{\Gamma t}-1}{\Gamma }+\tfrac12d\hbar&\,.
\ea
$$
If $R_{\hbar,N}^{in}$ is a T\"oplitz operator, more precisely if $R_{\hbar,N}^{in}=\Op^T_\hbar((2\pi\hbar)^{dN}\mu^{in}_N)$ for some symmetric Borel probablity measure $\mu^{in}_N$ on $(\bR^d)^N$, one has
$$
\ba
\frac1n\MKd(f(t)^{\otimes n},\widetilde W_\hbar[R_{\hbar, N}^\mathbf{n}(t)])^2
\\
\le\left(\frac1N\MKd((f^{in})^{\otimes N},\mu^{in}_N)^2+\tfrac12d\hbar\right)e^{\Gamma t}+\frac{(2\|\grad V\|_{L^\infty})^2}{N-1}\frac{e^{\Gamma t}-1}{\Gamma }+\tfrac12d\hbar&\,.
\ea
$$

\begin{Rmk}
The argument in Step 4 can be summarized as follows: the first equation in the BBGKY hierarchy at the level of couplings (i.e. equation (\ref{BBGKY1})) is used in the stability part of the convergence analysis, while the last equation
in the BBGKY hierarchy, or equivalently the $N$-particle equation (\ref{EqNCoupl}) at the level of couplings, is used in the consistency part of the proof. None of the intermediate equations in the BBGKY hierarchy is used in the proof.
In view of this remark, it is interesting to compare the method used in the proof of Theorem \ref{T-NSV} with the abstract argument for hierarchies outlined in \cite{BEGMY}, which is based on a Cauchy-Kovalevska argument.
\end{Rmk}


\section{Proof of Theorem \ref{T-SL}}


The proof of  Theorem \ref{T-SL} is similar to the proofs of Theorems \ref{T-HV} and \ref{T-NSV}. We only sketch the argument, and insist on the differences with the proofs of Theorems \ref{T-HV} and \ref{T-NSV}.

Let $Q^{in}_{\hbar,N}\in\cC^s(F^{in}_N,R^{in}_{\hbar,N})$, and let $(t,X_N,\Xi_N)\mapsto Q_{\hbar,N}(t,X_N,\Xi_N)$ be the solution of the Cauchy problem
\be\lb{CouplNSemiC}
\left\{
\ba
{}&\d_tQ_{\hbar,N}+\{\bH_N,Q_{\hbar,N}\}_N+\frac{i}{\hbar}[\cH_{\hbar,N},Q_{\hbar,N}]_N=0\,,
\\
&Q_{\hbar,N}\rstr_{t=0}=Q^{in}_{\hbar,N}\,,
\ea
\right.
\ee
where the classical Hamiltonian $\bH_N$ is defined in (\ref{CHamN}) and the quantum Hamiltonian $\cH_{\hbar,N}$ in (\ref{QHamN}). By the same argument as in Lemma \ref{L-NQCoupl}, we see that
$$
Q_{\hbar,N}(t)\in\cC^s(F_N(t),R_{\hbar,N}(t))\,,\quad\hbox{ for each }t\ge 0\,.
$$

Set
$$
\ba
\cD_{\hbar,N}(t)=\frac1N\int_{(\bR^d\times\bR^d)^N}\Tr_{\fH_N}\left(Q_{\hbar,N}(t,X_N,\Xi_N)\sum_{j=1}^Nc_{\hbar,j}\right)dX_Nd\Xi_N
\\
=\int_{(\bR^d\times\bR^d)^N}\Tr_{\fH_N}\left(Q^\mathbf{1}_{\hbar,N}(t,x_1,\xi_1)c_{\hbar,1}\right)dx_1d\xi_1\,.
\ea
$$
as in Lemma \ref{L-DMargi} with $n=1$.

Multiplying each side of (\ref{CouplNSemiC}) by $c_{\hbar,1}$, taking the trace and integrating in $(x_1,\xi_1)$, we get
$$
\ba
\dot{\cD}_{\hbar,N}(t)=\int_{\bR^d\times\bR^d}\Tr_{\fH}(Q^\mathbf{1}_{\hbar,N}(t,x_1,\xi_1)\{\tfrac12\xi_1,c_{\hbar,1}(x_1,\xi_1)\})dx_1d\xi_1
\\
+\int_{\bR^d\times\bR^d}\Tr_{\fH}(Q^\mathbf{1}_{\hbar,N}(t,x_1,\xi_1)[-\tfrac{i}2\hbar\Dlt_{y_1},c_{\hbar,1}(x_1,\xi_1)])dx_1d\xi_1
\\
+\int_{(\bR^d\times\bR^d)^2}\Tr_{\fH_2}(Q^\mathbf{2}_{\hbar,N}(t,X_2,\Xi_2)\{\tfrac{N-1}NV(x_1-x_2),c_{\hbar,1}(x_1,\xi_1)\})dX_2d\Xi_2
\\
+\int_{\bR^d\times\bR^d}\frac{i}{\hbar}\Tr_{\fH_2}(Q^\mathbf{2}_{\hbar,N}(t,X_2,\Xi_2)[\tfrac{N-1}NV(y_1-y_2),c_{\hbar,1}(x_1,\xi_1)])dX_2d\Xi_2\,.
\ea
$$

One has
$$
\{\tfrac12\xi_1,c_{\hbar,1}(x_1,\xi_1)\}=\xi_1\cdot\grad_{x_1}c_{\hbar,1}(x_1,\xi_1)=\xi_1\cdot(x_1-y_1)\,,
$$
while
$$
[-\tfrac{i}2\hbar\Dlt_{y_1},c_{\hbar,1}(x_1,\xi_1)]=-\tfrac12i\hbar\grad_{y_1}\cdot(y_1-x_1)-\tfrac12i\hbar(y_1-x_1)\cdot\grad_{y_1}\,.
$$

Likewise
$$
\{\tfrac{N-1}NV(x_1-x_2),c_{\hbar,1}(x_1,\xi_1)\}=-\tfrac{N-1}N\grad V(x_1-x_2)\cdot(\xi_1+i\hbar\grad_{y_1})\,,
$$
while
$$
\ba
\frac{i}{\hbar}[\tfrac{N-1}NV(y_1-y_2),c_{\hbar,1}(x_1,\xi_1)]=-\frac{i}{\hbar}\tfrac{N-1}N[c_{\hbar,1}(x_1,\xi_1),V(y_1-y_2)]
\\
=-\frac{i}{\hbar}\tfrac{N-1}N\left(\tfrac12(\xi_1+i\hbar\grad_{y_1})\cdot i\hbar\grad V(y_1-y_2)+\tfrac12i\hbar\grad V(y_1-y_2)\cdot (\xi_1+i\hbar\grad_{y_1})\right)
\\
=\tfrac{N-1}N\left(\tfrac12(\xi_1+i\hbar\grad_{y_1})\cdot\grad V(y_1-y_2)+\tfrac12\grad V(y_1-y_2)\cdot (\xi_1+i\hbar\grad_{y_1})\right)\,.
\ea
$$
Thus
$$
\ba
\dot{\cD}_{\hbar,N}(t)=\int_{\bR^d\times\bR^d}\Tr_{\fH}(Q^\mathbf{1}_{\hbar,N}(t,x_1,\xi_1)\tfrac12(\xi_1+i\hbar\grad_{y_1})\cdot(x_1-y_1))dx_1d\xi_1
\\
+\int_{\bR^d\times\bR^d}\Tr_{\fH}(Q^\mathbf{1}_{\hbar,N}(t,x_1,\xi_1)\tfrac12(x_1-y_1)\cdot(\xi_1+i\hbar\grad_{y_1}))dx_1d\xi_1
\\
-\frac{N-1}{2N}\int_{(\bR^d\times\bR^d)^2}\Tr_{\fH_2}(Q^\mathbf{2}_{\hbar,N}(t,X_2,\Xi_2)(\xi_1+i\hbar\grad_{y_1})\cdot\mathcal W(X_2,Y_2))dX_2d\Xi_2
\\
-\frac{N-1}{2N}\int_{(\bR^d\times\bR^d)^2}\Tr_{\fH_2}(Q^\mathbf{2}_{\hbar,N}(t,X_2,\Xi_2)\mathcal W(X_2,Y_2)\cdot(\xi_1+i\hbar\grad_{y_1}))dX_2d\Xi_2
\\
\le \cD_{\hbar,N}(t)+\frac{N-1}{2N}\int_{(\bR^d\times\bR^d)^2}\Tr_{\fH_2}(Q^\mathbf{2}_{\hbar,N}(t,X_2,\Xi_2)|\xi_1+i\hbar\grad_{y_1}|^2)dX_2d\Xi_2
\\
+\frac{N-1}{2N}\int_{(\bR^d\times\bR^d)^2}\Tr_{\fH_2}(Q^\mathbf{2}_{\hbar,N}(t,X_2,\Xi_2)|\mathcal W(X_2,Y_2)|^2)dX_2d\Xi_2&\,,
\ea
$$
with
$$
\mathcal W(X_2,Y_2):=(\grad V(x_1-x_2)-\grad V(y_1-y_2))\,.
$$
Denoting again  $L:=\Lip(\grad V)$, one has
$$
|\mathcal W(X_2,Y_2)|\le L(|x_1-y_1|+|x_2-y_2|)
$$
and therefore
$$
\ba
\dot{\cD}_{\hbar,N}(t)\le \cD_{\hbar,N}(t)+\tfrac12\int_{(\bR^d\times\bR^d)^2}\Tr_{\fH_2}(Q^\mathbf{2}_{\hbar,N}(t,X_2,\Xi_2)|\xi_1+i\hbar\grad_{y_1}|^2)dX_2d\Xi_2
\\
+\tfrac12L^2\int_{(\bR^d\times\bR^d)^2}\Tr_{\fH_2}(Q^\mathbf{2}_{\hbar,N}(t,X_2,\Xi_2)(|x_1-y_1|+|x_2-y_2|)^2)dX_2d\Xi_2
\\
\le \cD_{\hbar,N}(t)+\tfrac12\int_{\bR^d\times\bR^d}\Tr_{\fH}(Q^\mathbf{1}_{\hbar,N}(t,x_1,\xi_1)|\xi_1+i\hbar\grad_{y_1}|^2)dx_1d\xi_1
\\
+2L^2\int_{(\bR^d\times\bR^d)^2}\Tr_\fH(Q^\mathbf{1}_{\hbar,N}(t,x_1,\xi_1)|x_1-y_1|^2)dx_1d\xi_1
\\
\le(1+\max(1,4L^2))\cD_{\hbar,N}(t)&\,.
\ea
$$
Hence, for all $t\ge 0$ and all $n=1,\ldots,N$, one has
$$
\frac1nE_\hbar(F^\mathbf{n}_N(t),R^\mathbf{n}_{\hbar,N}(t))^2\le \cD_{\hbar,N}(t)\le \cD_{\hbar,N}(0)e^{\L t}\,.
$$
Minimizing over the initial coupling $Q^{in}_{\hbar,N}\in\cC^s(F^{in}_N,R^{in}_{\hbar,N})$, and arguing as in (\ref{MinSym}), one finds that
$$
\inf_{Q^{in}_{\hbar,N}\in\cC^s(F^{in}_N,R^{in}_{\hbar,N})}\cD_{\hbar,N}(0)^2=\frac1NE_\hbar(F^{in}_N,R^{in}_{\hbar,N})^2\,.
$$
Thus, for all $t\ge 0$ and all $n=1,\ldots,N$, 
$$
\frac1nE_\hbar(F^\mathbf{n}_N(t),R^\mathbf{n}_{\hbar,N}(t))^2\le \frac1NE_\hbar(F^{in}_N,R^{in}_{\hbar,N})^2 e^{\L t}\,.
$$
We conclude following the end of the proof of Theorem \ref{T-NSV}, Section \ref{concl}.

\vskip 1cm
\noindent
\textbf{Acknowledgments:} This work has been partially carried out thanks to the support of the A*MIDEX project (n$^o$ ANR-11-IDEX-0001-02) funded by the ``Investissements d'Avenir" French Government program, managed by the French 
National Research Agency (ANR). T.P. thanks also the Dipartimento di Matematica, Sapienza Universit\`a di Roma, for its kind hospitality during the completion of this work. Finally we thank both referees for their suggestions which  helped us
improving the presentation of this work.

\smallskip
\noindent
\textbf{Conflict of interest:} The authors declare that they have no conflict of interest.


\end{document}